\newcommand{\myState}[1]{\State\parbox[t]{\dimexpr\linewidth-\algorithmicindent}{#1\strut}}
\newcommand{\myStateDouble}[1]{\State\parbox[t]{\dimexpr\linewidth-\algorithmicindent-\algorithmicindent}{#1\strut}}
\newcommand{\norm}[1]{\left\|#1\right\|}
\newcommand{\abs}[1]{\left|#1\right|}
\newcommand{\F}{\mathsf{F}}
\renewcommand{\d}{\mathsf{d}}
\newcommand{\T}{\mathsf{T}}
\newcommand{\ds}{\,\d s}
\newcommand{\G}{\mathsf{G}}
\newcommand{\A}{\mathsf{A}}
\newcommand{\dt}{\,\d t}
\newcommand{\uh}{u}
\newcommand{\dx}{\,\d\bm x}
\newcommand{\I}{\mathcal{I}^h}
\newcommand{\jmp}[1]{\left\llbracket#1\right\rrbracket}
\newcommand{\NN}[1]{\left|\!\left|\!\left|#1\right|\!\right|\!\right|}
\newcommand{\ptc}{{\sf PTC}}
\renewcommand{\P}{\tt Poinc}
\newtheorem{theorem}{Theorem}[section]
\newtheorem{lemma}[theorem]{Lemma}
\newtheorem{proposition}[theorem]{Proposition}
\theoremstyle{definition}
\newtheorem{example}[theorem]{Example}
\newtheorem{remark}[theorem]{Remark}
\title[Adaptive Pseudo-Transient-Continuation-Galerkin Methods]{Adaptive Pseudo-Transient-Continuation-Galerkin Methods for Semilinear Elliptic Partial Differential Equations}
\author[M.~Amrein]{Mario Amrein}
\author[T.~P.~Wihler]{Thomas P.~Wihler}
\address{Mathematics Institute, University of Bern, CH-3012 Switzerland}
\email{mario.amrein@hslu.ch}
\email{wihler@math.unibe.ch}
\begin{document}
\normalem
\begin{abstract}
In this paper we investigate the application of pseudo-transient-continuation (\ptc) schemes for the numerical solution of semilinear elliptic partial differential equations, with possible singular perturbations. We will outline a residual reduction analysis within the framework of general Hilbert spaces, and, subsequently, employ the \ptc-methodology in the context of finite element discretizations of semilinear boundary value problems. Our approach combines both a prediction-type \ptc-method (for infinite dimensional problems) and an adaptive finite element discretization (based on a robust \emph{a posteriori} residual analysis), thereby leading to a \emph{fully adaptive \ptc-Galerkin scheme}. Numerical experiments underline the robustness and reliability of the proposed approach for different examples.
\end{abstract}

\keywords{Adaptive pseudo transient continuation method, dynamical system, steady states, semilinear elliptic problems, singularly perturbed problems, adaptive finite element methods.}

\subjclass[2010]{49M15,58C15,65N30}

\maketitle

\section{Introduction and Problem Formulation}

The focus of this paper is on the numerical approximation of semilinear elliptic partial differential equations (PDE), with possible singular perturbations. More precisely, for a fixed parameter~$\varepsilon>0$ (possibly with~$\varepsilon\ll 1$), and a continuously differentiable function $f:\,\mathbb{R}\to\mathbb{R}$, we consider the problem of finding a solution function~$u:\,\Omega\to\mathbb{R}$ which satisfies
\begin{equation}\label{poisson}
\begin{aligned}
-\varepsilon \Delta u &=f(u) \text{ in }  \Omega,\qquad
u=0 \text{ on } \partial \Omega. 
\end{aligned}
\end{equation}
Here, $\Omega\subset\mathbb{R}^d$, with $d=1$ or $d=2$, is an open and bounded 1d interval or a 2d Lipschitz polygon, respectively. Problems of this type appear in a wide range of applications including, e.g., nonlinear reaction-diffusion in ecology and chemical models~\cite{CaCo03,Ed05,Fr08,Ni11,OkLe01}, economy~\cite{BaBu95}, or classical and quantum physics~\cite{BeLi83,St77}. 
From an analysis point of view, semilinear elliptic boundary value problems~\eqref{poisson} have been studied in detail by a number of authors over the last few decades; we refer, e.g., to the monographs~\cite{AmMa06,Ra86,Sm94} and the references therein.
In particular, solutions of~\eqref{poisson} are known to be typically not unique (even infinitely many solutions may exist), and, in the singularly perturbed case, to exhibit boundary layers, interior shocks, and (multiple) spikes. 
The existence of multiple solutions due to the nonlinearity of the problem and/or the appearance of singular effects are challenging issues when solving problems of this type numerically; see, e.g., \cite{RoStTo08,Verhulst}.

\subsubsection*{Linearized Galerkin Methods}
There are, in general, two approaches when solving nonlinear differential equations numerically: Either the nonlinear PDE problem to be solved is first discretized; this leads to a nonlinear algebraic system. Or, alternatively, a local linearization procedure, resulting in a sequence of linear PDE problems, is applied; these linear problems are subsequently discretized by a suitable numerical approximation scheme. We emphasize that the latter approach enables the use of the large body of existing numerical analysis and computational techniques for \emph{linear} problems (such as, e.g., the development of classical residual-based error bounds). The concept of approximating infinite dimensional nonlinear problems by appropriate \emph{linear discretization schemes} has been studied by several authors in the recent past. For example, the approach presented in~\cite{CongreveWihler:15} (see also the work~\cite{GarauMorinZuppa:11,ChaillouSuri:07}) combines fixed point linearization methods and Galerkin approximations in the context of strictly monotone problems. Similarly, in~\cite{AmreinMelenkWihler:16,AmreinWihler:14,AmreinWihler:15} (see also~\cite{El-AlaouiErnVohralik:11}), the nonlinear PDE problems at hand are linearized by an (adaptive) Newton technique, and, subsequently, discretized by a linear finite element method. On a related note, the discretization of a sequence of linearized problems resulting from the local approximation of semilinear evolutionary problems has been investigated in~\cite{AmreinWihlerTime:15}. In all of the works~\cite{AmreinMelenkWihler:16,AmreinWihler:14,AmreinWihler:15,AmreinWihlerTime:15,CongreveWihler:15}, the key idea in obtaining fully adaptive discretization schemes is to provide a suitable interplay between the underlying linearization procedure and (adaptive) Galerkin methods; this is based on investing computational time into whichever of these two aspects is currently dominant.

\subsubsection*{\ptc-Approach}
In contrast to the classical Newton linearization method, the approach to be discussed in this work relies on a pseudo transient continuation procedure (see, e.g.,~\cite[\S6.4]{5} for finite dimensional problems). The basis of this idea is to first interpret any solution $u$ of the nonlinear equation $ \F(u)=~0$, where~$\F$ is a given operator, as a steady state of the initial value problem 
\[
\dot{u}=\F(u),\qquad u(0)=u_{0},
\]
and, then, to discretize the dynamical system in time by means of the backward Euler method. Furthermore, the resulting sequence of nonlinear problems, $u_{n+1}=u_{n}+t_{n}\F(u_{n+1})$, $n\ge 0$, where~$t_n>0$ is a given time step, is linearized with the aid of the Newton method. This scheme is termed \ptc-method. On a local level, i.e., whenever the iteration is close enough to a solution point, the \ptc-method turns into the standard Newton method. Otherwise, if the iteration is far away from a solution point, then the scheme can be interpreted as a continuation method. In a certain sense, the \ptc-method can also be understood as an inexact Newton method. Following the methodology developed in the articles~\cite{AmreinWihler:14,AmreinWihler:15,AmreinWihlerTime:15,CongreveWihler:15}, the present paper employs the idea of combining the \ptc-linearization approach with adaptive~$\mathbb{P}_1$-finite element methods (FEM). Our analysis will proceed along the lines of~\cite[\S6.4]{5}, with the aim to provide an optimal residual reduction procedure in the local linearization process. Moreover, in order to address the issue of devising $\varepsilon$-robust \emph{a posteriori} error estimates for the Galerkin discretizations, we employ the approach presented in~\cite{Verfuerth}.

\subsubsection*{Outline}
The outline of this paper is as follows. In Section \ref{sec:prediction} we study the \ptc-method within the context of general Hilbert spaces, and derive a residual reduction analysis. Subsequently, the purpose of Section~\ref{sc:Well-Posedness-FEM} is the discretization of the resulting sequence of {\em linear} problems by the finite element method, and the development of an $\varepsilon$-robust \emph{a posteriori} error analysis. The final estimate (Theorem~\ref{thm:1}) bounds the residual in terms of the (elementwise) finite element approximation (FEM-error) and the error caused by the linearization of the original problem. Then, in order to define a fully adaptive \ptc-Galerkin scheme, we propose an interplay between the adaptive \ptc-method and the adaptive finite element approach: More precisely, as the adaptive procedure is running, we either perform a \ptc-step in accordance with the suggested prediction strategy (Section~\ref{sec:prediction}) or refine the current finite element mesh based on the {\em a posteriori residual} estimate (Section~\ref{sc:Well-Posedness-FEM}); this is carried out depending on which of the errors (FEM-error or \ptc-error) is  more dominant in the present iteration step. In Section~\ref{sec:numerics} we provide a series of numerical experiments which show that the proposed scheme is reliable and $\varepsilon$-robust for reasonable choices of initial guesses. Finally, we add a few concluding remarks in Section~\ref{sc:concl}.

\subsubsection*{Problem Formulation}
In this paper, we suppose that a (not necessarily unique) solution~$u\in X:=H^1_0(\Omega)$ of~\eqref{poisson} exists; here, we denote by $H^1_0(\Omega)$ the standard Sobolev space of functions in~$H^1(\Omega)=W^{1,2}(\Omega)$ with zero trace on~$\partial\Omega$. Furthermore, signifying by~$X'=H^{-1}(\Omega)$ the dual space of~$X$, and upon defining the map $\F_{\varepsilon}: X\rightarrow X'$ through
\begin{equation}\label{eq:Fweak}
\left \langle \F_{\varepsilon}(u),v \right \rangle :=  \int_{\Omega}{\left\{f(u)v-\varepsilon \nabla u\cdot \nabla v\right\}}\dx \qquad \forall v\in X,
\end{equation}
where $\left\langle\cdot,\cdot\right\rangle$ signifies the dual pairing in~$X'\times X$, the above problem~\eqref{poisson} can be written as a nonlinear operator equation in~$X'$:
\[
\F_\varepsilon(u)=0,
\]
for an unknown zero~$u\in X$. For the purpose of defining the Newton linearization later on in this manuscript, we note that the Fr\'echet-derivative of $\F_{\varepsilon}$ at $u \in X$ is given by  
\[
\left \langle \F'_{\varepsilon}(u)w,v\right \rangle =\int_{\Omega}{\{f'(u)wv-\varepsilon \nabla w\nabla v\}\dx}, \quad v,w,\in X,
\]
where we write~$f'(u)=\partial_u f(u)$. In addition, we introduce the inner product
\[
(u,v)_{X}:=\int_{\Omega}{\{uv+\varepsilon \nabla u \cdot \nabla v\}\dx},\qquad u,v\in V,
\]
with induced norm on $X$ given by
\[
\NN{u}_{\varepsilon,D}:=\Bigl(\varepsilon\norm{\nabla u}_{0,D}^2 +\norm{u}_{0,D}^2 \Bigr)^{\nicefrac{1}{2}},\qquad u\in H^1(D),
\]
where $\|\cdot\|_{0,D}$ denotes the $L^2$-norm on~$D$. Frequently, for~$D=\Omega$, the subindex~`$D$' will be omitted.
Note that, in the case of $ f(u)=-u+g$, with $g \in L^{2}(\Omega)$, i.e., when \eqref{poisson} is linear and strongly elliptic, the norm $ \NN{\cdot}_{\varepsilon,\Omega}$ is a natural energy norm on $X$. As usual, for any $\varphi \in X'$, the dual norm is given by
\[
\norm{\varphi}_{X'}=\sup_{x\in X\setminus\{0\}}{\frac{\left\langle \varphi,x\right \rangle}{\NN{x}_{\varepsilon}}}.
\]

In what follows we shall use the abbreviation $ x\preccurlyeq y $ to mean $x\leq cy $, for a constant $c>0$ independent of the mesh size $h$ and of~$ \varepsilon>0$.

\section{Abstract Framework in Hilbert Spaces}
\label{sec:prediction}
In this section we briefly revisit a possible derivation of the \ptc-scheme. Moreover, following along the lines of \cite{5} we will discuss how residual reduction, based on a \ptc-iteration-scheme, can be achieved within the context of general Hilbert spaces. To this end, let $ X $ be a real Hilbert space with inner product $(\cdot,\cdot)_{X}$ and induced norm $(x,x)_{X}^{\nicefrac{1}{2}}=\norm{x}_{X}$. Furthermore, by $\mathcal{L}(X;X')$, we signify the space of all bounded linear operators from $X$ into $X'$, with norm
\[
\|\mathsf{L}\|_{\mathcal{L}(X;X')}=\sup_{\genfrac{}{}{0pt}{}{x\in X}{\|x\|_X=1}}\|\mathsf{L}(x)\|_{X'},
\]
for any~$\mathsf{L}\in\mathcal{L}(X;X')$.

\subsection{\ptc-Scheme}
We take the view of dynamical systems, i.e., given a possibly nonlinear operator
\[
\F:X\rightarrow X',
\]
we interpret any zero $u_{\infty}\in X$ of~$\F$, i.e., $\F(u_\infty)=0$, as a \emph{steady state} of the dynamical system
\begin{align}\label{eq:dynamical-system}
u(0)=u_{0},\qquad(\dot{u}(t),v)_{X}=\left\langle \F(u(t)),v\right\rangle\quad\forall v\in X, t>0,
\end{align}
where we denote by~$\langle\cdot,\cdot\rangle$ the dual pairing in~$X'\times X$ as before, and~$u_0\in X$ is a given initial guess. More precisely, we suppose that there exists a solution~$u:\,[0,\infty)\to X$ of~\eqref{eq:dynamical-system} with~$\lim_{t\to\infty}u(t)=u_\infty$ in a suitable sense. Then, we discretize \eqref{eq:dynamical-system} in time using the backward Euler method, i.e.,	
\begin{equation}\label{eq:Euler}
(u_{n+1},v)_{X}=(u_{n},v)_{X}+k_{n}\left\langle \F(u_{n+1}),v\right\rangle \quad \forall v\in X,\qquad n\geq 0,
\end{equation}
where $k_{n}>0 $ signifies the (possibly adaptively chosen) temporal step size. Introducing, for~$n\ge 0$, an operator~$\G_n:\,X\to X'$ by
\[
\left\langle \G_n(u),v \right\rangle:=(u-u_{n},v)_{X}-k_{n}\left\langle \F(u),v\right\rangle\quad\forall v\in X,
\]
we see that the zeros of $ \G_n $ define the next update $u_{n+1}$ in \eqref{eq:Euler}. Then, applying Newton's method to $\G_n$ yields a linear equation for an unknown increment~$\widetilde\delta_n\in X$ such that
\[
\left\langle \G_n'(u_{n})\widetilde\delta_{n},v \right\rangle=-\left\langle \G_n(u_{n}),v \right\rangle\quad\forall v\in X,
\]
and the update
\[
u_{n+1}=u_{n}+\widetilde\delta_{n},
\]
where~$\G_n'$ denotes the Fr\'echet derivative of~$\G_n$. Equivalently, upon rescaling~$\delta_n=k_n^{-1}\widetilde\delta_n$, we have
\begin{equation}
\label{eq:linear-implicit-scheme}
(\delta_{n},v)_{X}-k_{n}\left\langle \F'(u_{n})\delta_{n},v \right\rangle=\left\langle \F(u_{n}),v \right\rangle,\qquad u_{n+1}=u_{n}+k_{n}\delta_{n}.
\end{equation}
Incidentally, with~$\delta_n\rightharpoonup 0$ weakly in~$X$, we obtain Newton's method as applied to~$\F$. In order to simplify notation we introduce, for given~$u\in X$ and~$t>0$, an additional operator
\[
\A[t;u]:\,X \to X',
\]
which is defined by 
\begin{equation}
\label{eq:operator-A}
x\in X:\qquad \left\langle \A[t;u]x,v \right\rangle:=(x,v)_{X}-t\left\langle \F'(u)x,v \right \rangle\quad\forall v\in X.
\end{equation}
We can then rewrite~\eqref{eq:linear-implicit-scheme} as 
\begin{equation}
\label{eq:ptc}
\left\langle \A[k_n;u_{n}]\delta_n,v \right\rangle=\left\langle \F(u_{n}),v \right\rangle\quad\forall v\in X,\qquad u_{n+1}=u_{n}+k_n\delta_n.
\end{equation}
For~$n=0,1,2,\ldots$, with a given initial guess~$u_0\in X$, this iteration defines the~\ptc-scheme for the approximation of a zero of~$\F$. Evidently, in order to be able to solve for $\delta_n $ in \eqref{eq:ptc}, the operator $\A[k_n;u_{n}] $ needs to be invertible.

\subsection{Residual Analysis}
The aim of this section is to derive a residual estimate which paves the way for a residual reduction time stepping strategy. It is based on the following structural assumptions on the derivative of~$\F$:
\begin{enumerate}[(a)]
\item For given~$u_0\in X$, there exists a constant~$\mu=\mu(u_0)>0$ such that
\begin{equation}
\label{eq:A1}\tag{A.1}
\sup_{\genfrac{}{}{0pt}{}{x\in X}{\|x\|_X=1}}\left \langle \F'(u_0)x,x \right \rangle \leq -\mu.
\end{equation}
\item There is a constant~$L\ge 0$ such that there holds the Lipschitz property
\begin{equation}
\label{eq:A2}\tag{A.2}
\norm{\F'(x)-\F'(y)}_{\mathcal{L}(X;X')}\leq L\norm{x-y}_{X}\qquad\forall x,y\in X. 
\end{equation} 
\end{enumerate}
\begin{proposition}\label{pr:lm}
Let~$u_0\in X$ such that $\F'(u_0)\in\mathcal{L}(X;X')$. If~\eqref{eq:A1} holds, and if~$\F(u_0)\in X'$, then the linear problem
\begin{equation}\label{eq:sys}
\A[t;u_0](u(t)-u_0)=t\F(u_0)
\end{equation}
has a unique solution~$u(t)\in X$ for any~$t>0$.
\end{proposition}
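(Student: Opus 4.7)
The plan is to recast \eqref{eq:sys} as a variational problem on the real Hilbert space $X$ and invoke the Lax--Milgram lemma. Setting $z:=u(t)-u_0$, the problem reads: find $z\in X$ such that $\langle \A[t;u_0]z, v\rangle = t\langle \F(u_0), v\rangle$ for all $v\in X$. By definition~\eqref{eq:operator-A}, the left-hand side is generated by the bilinear form $a_t(z,v):=(z,v)_X - t\langle \F'(u_0)z, v\rangle$ on $X\times X$, while the right-hand side is a continuous linear functional on $X$ because $\F(u_0)\in X'$ by hypothesis.

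I would then verify the two hypotheses of Lax--Milgram. For \emph{continuity}, the inner-product term of $a_t$ is bounded by Cauchy--Schwarz, and the second term is controlled using the assumed boundedness $\F'(u_0)\in\mathcal{L}(X;X')$; together they yield $|a_t(x,v)|\leq (1+t\|\F'(u_0)\|_{\mathcal{L}(X;X')})\|x\|_X\|v\|_X$ for all $x,v\in X$. For \emph{coercivity}, assumption \eqref{eq:A1} does the essential work: rescaling the supremum over the unit sphere to arbitrary $x\in X$ gives $\langle\F'(u_0)x,x\rangle \leq -\mu\|x\|_X^2$, so that
\[
a_t(x,x) = \|x\|_X^2 - t\langle\F'(u_0)x,x\rangle \geq (1+t\mu)\|x\|_X^2,
\]
which is strictly coercive with constant at least $1$ (and improving as $t>0$ grows). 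Lax--Milgram then delivers a unique $z\in X$ solving the variational problem, and $u(t):=u_0+z$ is the unique solution of~\eqref{eq:sys}.

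I do not anticipate a serious obstacle here: condition~\eqref{eq:A1} encodes a strict negative definiteness of $\F'(u_0)$ which, combined with the identity shift inherent in $\A[t;u_0]$, produces coercivity essentially for free, uniformly in $t>0$. The only point needing mild care is the scaling step in the coercivity estimate, since \eqref{eq:A1} is formulated via a supremum over the unit sphere rather than as a pointwise quadratic inequality; this is a purely notational adjustment. A minor auxiliary remark is that the uniform lower bound $1+t\mu$ on the coercivity constant would also be useful later for the residual reduction analysis, since it gives an a priori stability estimate $\|u(t)-u_0\|_X \leq t(1+t\mu)^{-1}\|\F(u_0)\|_{X'}$ for free.
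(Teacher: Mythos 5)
Your proposal is correct and follows essentially the same route as the paper: recast \eqref{eq:sys} as a variational problem, establish coercivity with constant $1+t\mu$ from \eqref{eq:A1} and continuity from the boundedness of $\F'(u_0)$, and conclude by Lax--Milgram. The auxiliary stability estimate you mention is indeed exactly the bound \eqref{eq:first-estimate} proved in Lemma~\ref{lm:Lemma1}.
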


\begin{proof}
We apply the Lax-Milgram Lemma. In particular, we show that~$\A[t;u_0]$ is coercive and bounded on~$X$. Indeed, for all~$v\in X$, we have
\begin{equation}\label{eq:coercive}
\langle\A[t;u_0]v,v\rangle
=(v,v)_{X}-t\left\langle \F'(u_0)v,v \right \rangle\ge (1+\mu t)\|v\|_X^2,
\end{equation}
which proves coercivity. Moreover, for~$v,w\in X$, we have
\begin{align*}
\abs{\langle\A[t;u_0]v,w\rangle}
&\le\|v\|_X\|w\|_X+t\|\F'(u_0)v\|_{X'}\|w\|_X.
\end{align*}
Since~$\F'(u_0)$ is bounded, we deduce the boundedness of~$\A[t;u_0]$. This completes the proof.
\end{proof}

In order to devise a residual reduction analysis, we insert two preparatory results.

\begin{lemma}\label{lm:Lemma1}
If~\eqref{eq:A1} is satisfied, then we have
\begin{equation}\label{eq:2nd-estimate}
\norm{\A[t;u_0]^{-1}}_{\mathcal{L}(X';X)}\leq \frac{1}{1+t\mu}.
\end{equation}
Moreover, if~$\F(u_0)\in X'$, the estimate
\begin{equation}
\label{eq:first-estimate}
\norm{u(t)-u_0}_{X}\leq \frac{t}{1+t\mu}\norm{\F(u_0)}_{X'}
\end{equation}
holds true, where~ $u(t)$, $t\ge 0$, is the solution from~\eqref{eq:sys}.
\end{lemma}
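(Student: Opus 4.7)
The plan is to exploit the coercivity estimate already established in the proof of Proposition~\ref{pr:lm}, namely
\[
\langle \A[t;u_0]v,v\rangle \ge (1+\mu t)\|v\|_X^2 \qquad \forall v\in X,
\]
which is an immediate consequence of assumption~\eqref{eq:A1}. Proposition~\ref{pr:lm} already guarantees that $\A[t;u_0]$ is invertible, so $\A[t;u_0]^{-1}\in\mathcal{L}(X';X)$ is well-defined, and it only remains to quantify the operator norm.

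For the bound~\eqref{eq:2nd-estimate}, I would fix an arbitrary $\varphi\in X'$ and set $v:=\A[t;u_0]^{-1}\varphi\in X$, so that $\A[t;u_0]v=\varphi$. Testing with $v$ itself and invoking the coercivity display above together with Cauchy--Schwarz in the duality pairing yields
\[
(1+\mu t)\|v\|_X^2 \le \langle \A[t;u_0]v,v\rangle = \langle\varphi,v\rangle \le \|\varphi\|_{X'}\|v\|_X,
\]
from which $\|\A[t;u_0]^{-1}\varphi\|_X=\|v\|_X \le (1+\mu t)^{-1}\|\varphi\|_{X'}$ follows. Taking the supremum over $\varphi$ in the unit ball of $X'$ gives~\eqref{eq:2nd-estimate}.

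For the second inequality~\eqref{eq:first-estimate}, I would simply read off from~\eqref{eq:sys} that $u(t)-u_0 = \A[t;u_0]^{-1}\bigl(t\F(u_0)\bigr)$, and then apply the operator-norm bound just established to obtain
\[
\|u(t)-u_0\|_X \le \|\A[t;u_0]^{-1}\|_{\mathcal{L}(X';X)}\cdot t\|\F(u_0)\|_{X'} \le \frac{t}{1+t\mu}\|\F(u_0)\|_{X'}.
\]

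There is no real obstacle here: the only structural input is~\eqref{eq:A1}, and both estimates are purely energy-type arguments that reuse the Lax--Milgram coercivity constant from the preceding proposition. The Lipschitz hypothesis~\eqref{eq:A2} is not needed at this stage; it will presumably enter later when controlling the residual $\F(u(t))$ itself.
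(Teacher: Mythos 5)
Your proof is correct, and the first half coincides with the paper's argument: the bound~\eqref{eq:2nd-estimate} is obtained in both cases from the coercivity estimate~\eqref{eq:coercive}, the paper writing it as $\norm{\A[t;u_0]v}_{X'}\ge(1+t\mu)\norm{v}_X$ and you writing the equivalent statement for $v=\A[t;u_0]^{-1}\varphi$. For the second estimate~\eqref{eq:first-estimate} you take a genuinely shorter route: you read $u(t)-u_0=\A[t;u_0]^{-1}\bigl(t\F(u_0)\bigr)$ off~\eqref{eq:sys} and apply the inverse-operator bound just proved. The paper instead redoes the energy computation from scratch, testing the definition of $\A[t;u_0]$ with $v=u(t)-u_0$ and invoking~\eqref{eq:A1} directly, which yields the intermediate identity
\[
\norm{u(t)-u_0}_{X}^{2}
= t\left\langle \F(u_0),u(t)-u_0\right\rangle+t\left\langle \F'(u_0)(u(t)-u_0),u(t)-u_0\right\rangle .
\]
The two derivations are mathematically equivalent (both rest solely on the same coercivity constant $1+t\mu$), and yours is arguably cleaner; what the paper's longer version buys is precisely this displayed identity~\eqref{eq:use1}, which is cited again in the subsequent remark and in the derivation of the computable quantity $\bm{\mu}_n$, so it is worth recording even if one proves the lemma your way. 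One small point of hygiene, which you handle correctly: the existence of $\A[t;u_0]^{-1}$ requires the boundedness hypothesis on $\F'(u_0)$ from Proposition~\ref{pr:lm} in addition to~\eqref{eq:A1}, and you rightly invoke that proposition before using the inverse.
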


\begin{proof}
From~\eqref{eq:coercive}, we readily arrive at
\[
\norm{\A[t;u_0]v}_{X'}\geq \norm{v}_{X}(1+t\mu)\quad\forall v \in X,
\]
from which we deduce~\eqref{eq:2nd-estimate}. Furthermore, the second bound results by definition of~$\A[t;u_0]$ in~\eqref{eq:operator-A} with~$v=u(t)-u_0$, and from~\eqref{eq:A1}. Indeed,
\begin{equation}
\label{eq:use1}
\begin{split}
\norm{u(t)-u_0}_{X}^{2}
&=\langle\A[t;u_0](u(t)-u_0),u(t)-u_0\rangle+t\langle\F'(u_0)(u(t)-u_0),u(t)-u_0\rangle\\
&=t\left \langle \F(u_0),u(t)-u_0\right \rangle+t\left\langle \F'(u_0)(u(t)-u_0),u(t)-u_0\right\rangle \\
&\leq t\norm{\F(u_0)}_{X'}\norm{u(t)-u_0}_{X}-t\mu\norm{u(t)-u_0}_{X}^{2},
\end{split}
\end{equation}
which immediately implies~\eqref{eq:first-estimate}. 
\end{proof}

\begin{lemma}\label{lm:Lemma2}
If~$u$ from~\eqref{eq:sys} is differentiable for any~$t\ge 0$, then it holds that
\begin{equation}\label{eq:id1}
\A[t;u_0]\dot{u}(t)=\F(u_0)+\F'(u_0)(u(t)-u_0),
\end{equation}
as well as
\begin{equation}
\label{eq:derivative-control}
t\A[t;u_{0}]\dot{u}(t)
=(u(t)-u_0,\cdot)_{X}
\end{equation}
in~$X'$.
\end{lemma}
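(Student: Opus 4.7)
The plan is to differentiate equation~\eqref{eq:sys} with respect to $t$ and then re-substitute \eqref{eq:sys} itself to eliminate the $\F(u_0)$ term. Since all identities are to be understood in $X'$, I would verify them by testing against an arbitrary $v\in X$, using the definition~\eqref{eq:operator-A} of $\A[t;u_0]$.

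\textbf{First identity.} Writing~\eqref{eq:sys} in tested form, one has
\[
(u(t)-u_0,v)_{X}-t\langle \F'(u_0)(u(t)-u_0),v\rangle = t\langle \F(u_0),v\rangle\qquad\forall v\in X.
\]
Since $v$ is independent of $t$ and $u(\cdot)$ is assumed differentiable, I would differentiate both sides with respect to $t$ to obtain
\[
(\dot u(t),v)_{X}-\langle \F'(u_0)(u(t)-u_0),v\rangle - t\langle \F'(u_0)\dot u(t),v\rangle = \langle \F(u_0),v\rangle.
\]
Regrouping the first and third terms on the left as $\langle\A[t;u_0]\dot u(t),v\rangle$ (by~\eqref{eq:operator-A}) and moving $\langle \F'(u_0)(u(t)-u_0),v\rangle$ to the right-hand side gives exactly~\eqref{eq:id1} after noting that $v\in X$ was arbitrary.

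\textbf{Second identity.} Multiplying \eqref{eq:id1} by $t$ and testing against $v$, one has
\[
t\langle\A[t;u_0]\dot u(t),v\rangle = t\langle\F(u_0),v\rangle + t\langle\F'(u_0)(u(t)-u_0),v\rangle.
\]
But by the original equation~\eqref{eq:sys}, the right-hand side equals $(u(t)-u_0,v)_{X}$. Since this holds for every $v\in X$, identity~\eqref{eq:derivative-control} follows in $X'$.

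The argument is essentially bookkeeping; the only subtle point is the legitimacy of differentiating the duality pairings in $t$, which is justified by the standing differentiability hypothesis on $u$ and the fact that $\F(u_0)$ and $\F'(u_0)$ do not depend on $t$.
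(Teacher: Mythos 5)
Your proof is correct and follows essentially the same route as the paper: differentiate~\eqref{eq:sys} in $t$ to obtain~\eqref{eq:id1}, then multiply by $t$ and re-substitute~\eqref{eq:sys} (together with the definition~\eqref{eq:operator-A}) to get~\eqref{eq:derivative-control}. Working in tested form against an arbitrary $v\in X$ rather than at the operator level in $X'$ is only a cosmetic difference.
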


\begin{proof}
Recalling~\eqref{eq:operator-A}, we observe that
\[
\frac{\d}{\dt}\A[t;u_{0}]v=-\F'(u_0)v\qquad\forall v\in X,
\]
in~$X'$. Then, differentiating~\eqref{eq:sys} with respect to $t$ implies
\[
\A[t;u_0]\dot{u}(t)-\F'(u_0)(u(t)-u_0) = \F(u_0),
\]
which yields~\eqref{eq:id1}. Furthermore, multiplying this equality by~$t$, and applying the definition of~$\A[t;u_0]$ from~\eqref{eq:operator-A}, it follows that
\[
t\A[t;u_0]\dot{u}(t)
=t\F(u_0)+(u(t)-u_0,\cdot)_X-\A[t;u_0](u(t)-u_0)
\]
in~$X'$. Using~\eqref{eq:sys} gives~\eqref{eq:derivative-control}.
\end{proof}

Following along the lines of~\cite{5} there holds the ensuing residual reduction result.

\begin{theorem}\label{thm:res}
Under the assumptions in Lemmas~\ref{lm:Lemma1} and~\ref{lm:Lemma2}, and if~\eqref{eq:A2} holds, then we have
\begin{equation}\label{eq:res}
\norm{\F(u(t))}_{X'}\leq \gamma(t)\norm{\F(u_{0})}_{X'},
\end{equation}
with
\[
\gamma(t):=\frac{1}{1+t\mu}\left(1+\frac{Lt^{2}}{2(1+t\mu)}\norm{\F(u_0)}_{X'}\right)>0,
\]
for any~$t\ge 0$.
\end{theorem}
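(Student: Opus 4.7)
The plan is to expand $\F(u(t))$ around $u_0$ into a linear part plus a quadratic remainder, recognise the linear part as $\A[t;u_0]\dot u(t)$ via Lemma~\ref{lm:Lemma2}, convert the latter into a Riesz representative through~\eqref{eq:derivative-control}, and close the estimate with the \emph{a priori} bound~\eqref{eq:first-estimate} of Lemma~\ref{lm:Lemma1}.

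First, I would apply the fundamental theorem of calculus to the map $s\mapsto\F(u_0+s(u(t)-u_0))$, obtaining
\[
\F(u(t))=\F(u_0)+\F'(u_0)(u(t)-u_0)+R(t),
\]
with the remainder $R(t):=\int_0^1\bigl[\F'(u_0+s(u(t)-u_0))-\F'(u_0)\bigr](u(t)-u_0)\ds\in X'$. The Lipschitz assumption~\eqref{eq:A2} then immediately yields $\norm{R(t)}_{X'}\le\frac{L}{2}\norm{u(t)-u_0}_X^2$, which is where the quadratic term in $\gamma(t)$ originates.

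Second, identity~\eqref{eq:id1} lets me replace the linear part by $\F(u_0)+\F'(u_0)(u(t)-u_0)=\A[t;u_0]\dot u(t)$. Combining this with~\eqref{eq:derivative-control} gives $\A[t;u_0]\dot u(t)=t^{-1}(u(t)-u_0,\cdot)_X$ in $X'$, whose dual norm equals $t^{-1}\norm{u(t)-u_0}_X$ by the Riesz identity $\norm{(v,\cdot)_X}_{X'}=\norm{v}_X$. A triangle inequality then produces
\[
\norm{\F(u(t))}_{X'}\le\frac{\norm{u(t)-u_0}_X}{t}+\frac{L}{2}\norm{u(t)-u_0}_X^2.
\]

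Finally, inserting the bound $\norm{u(t)-u_0}_X\le\frac{t}{1+t\mu}\norm{\F(u_0)}_{X'}$ from~\eqref{eq:first-estimate} into both terms and factoring out $(1+t\mu)^{-1}\norm{\F(u_0)}_{X'}$ reproduces $\gamma(t)$ verbatim, yielding~\eqref{eq:res}. The only mildly delicate step is the bookkeeping of the factor $t$: it is the $t$ on the left of~\eqref{eq:derivative-control} that cancels against the $t$ inside $\A[t;u_0]\dot u(t)$ to produce the $1/t$ above, which then merges multiplicatively with $t/(1+t\mu)$ from~\eqref{eq:first-estimate} to give the clean $(1+t\mu)^{-1}$ prefactor. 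Beyond this, the argument is a standard linear-plus-quadratic Taylor estimate and I anticipate no real analytic obstacle.
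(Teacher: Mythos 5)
Your proof is correct, and it reproduces the constant $\gamma(t)$ exactly. The first half of your argument coincides with the paper's: both identify the affine part $\F(u_0)+\F'(u_0)(u(t)-u_0)$ with $\A[t;u_0]\dot u(t)$ via~\eqref{eq:id1}, and both convert this to $t^{-1}\norm{u(t)-u_0}_X$ through~\eqref{eq:derivative-control} and the Riesz identity. Where you genuinely diverge is in the treatment of the quadratic remainder. The paper expands $\F(u(t))$ by integrating $\F'(u(s))\dot u(s)$ along the solution trajectory $s\mapsto u(s)$, $s\in[0,t]$, which forces it to control $\norm{u(s)-u_0}_X$ and $\norm{\dot u(s)}_X$ at every intermediate time via~\eqref{eq:aux3} and Lemma~\ref{lm:Lemma1}, and then to evaluate $\int_0^t s(1+s\mu)^{-3}\ds=\nicefrac{t^2}{2(1+t\mu)^2}$ explicitly. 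You instead expand along the chord from $u_0$ to $u(t)$, so the remainder is bounded by $\frac{L}{2}\norm{u(t)-u_0}_X^2$ in one line from~\eqref{eq:A2}, and only the endpoint estimate~\eqref{eq:first-estimate} is needed; remarkably, the two routes land on the identical coefficient $\nicefrac{Lt^2}{2(1+t\mu)^2}$. Your version is shorter and needs less regularity of $s\mapsto u(s)$ for the expansion itself (though you still need differentiability of $u$ at $t$ for~\eqref{eq:id1} and~\eqref{eq:derivative-control}, which the hypotheses of Lemma~\ref{lm:Lemma2} supply); the paper's trajectory-based estimate has the advantage of being the template that is reused later in the derivation of the computable quantity $\bm L_n$, where the integral $\int_{t_n}^{t_{n+1}}\langle(\F'(u(s))-\F'(u_n))\dot u(s),\delta_n\rangle\ds$ along the trajectory is approximated by the trapezoidal rule. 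The only cosmetic point worth flagging is the division by $t$ in your triangle inequality, which requires $t>0$; the case $t=0$ is trivial since $u(0)=u_0$ and $\gamma(0)=1$.
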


\begin{proof}
For~$t\ge 0$, there holds
\begin{align*}
\F(u(t))&=\F(u_{0})+\int_{0}^{t}{\F'(u(s))\dot{u}(s)\ds}\\
&=\F(u_{0})+\F'(u_0)(u(t)-u_0)+\int_{0}^{t}{(\F'(u(s))-\F'(u_0))\dot{u}(s)\ds}.
\end{align*}
Involving~\eqref{eq:id1}, we infer that
\begin{align*}
\F(u(t))
&=\A[t;u_0]\dot{u}(t)+\int_{0}^{t}{(\F'(u(s))-\F'(u_0))\dot{u}(s)\ds}.
\end{align*}
Therefore, we have
\[
\norm{\F(u(t))}_{X'} \leq \norm{\A[t;u_0]\dot{u}(t)}_{X'}+\int_{0}^{t}{\norm{(\F'(u(s))-\F'(u_0))\dot{u}(s)}_{X'}\ds}.
\]
Employing \eqref{eq:A2} and applying~\eqref{eq:derivative-control}, we arrive at
\begin{equation}
\label{eq:step1}
t\norm{\F(u(t))}_{X'}\leq \norm{u(t)-u_0}_{X}+Lt\int_{0}^{t}{\norm{u(s)-u_0}_{X}\norm{\dot{u}(s)}_{X}\ds}.
\end{equation}
Moreover, again from~\eqref{eq:derivative-control}, we notice that
\begin{equation}\label{eq:aux3}
s\norm{\dot{u}(s)}_{X}\leq	\norm{\A[s;u_0]^{-1}}_{\mathcal{L}(X';X)}\norm{u(s)-u_0}_{X},\qquad s\ge 0,
\end{equation}
and, hence, by virtue of Lemma~\ref{lm:Lemma1}, we obtain
\[
s\norm{u(s)-u_0}_{X}\norm{\dot{u}(s)}_{X}
\le \frac{1}{1+s\mu}\|u(s)-u_0\|_X^2
\leq \frac{s^2}{(1+s\mu)^3}\norm{\F(u_0)}_{X'}^{2}.
\]
Combining this with~\eqref{eq:step1}, and using Lemma~\ref{lm:Lemma1} once more, leads to 
\begin{align*}
t\norm{\F(u(t))}_{X'}&\leq \frac{t}{1+t\mu}\norm{\F(u_0)}_{X'}+Lt\norm{\F(u_0)}_{X'}^2\int_{0}^{t}{\frac{s}{(1+s\mu)^{3}}\ds}\\
&=\frac{t}{1+t\mu}\norm{\F(u_0)}_{X'}+\frac{Lt^3}{2(1+\mu t)^2}\norm{\F(u_0)}_{X'}^2.
\end{align*}
This completes the proof.
\end{proof}

\begin{remark}
Referring to~\eqref{eq:use1}, we see that 
\[
\norm{u(t)-u_0}_{X}^{2}\leq t\left\langle \F(u_0),u(t)-u_0\right\rangle - t\mu \norm{u(t)-u_0}_{X}^{2}.
\]
Hence, whenever there holds $ t\left\langle \F(u_0),u(t)-u_0\right\rangle \leq  \norm{u(t)-u_0}_{X}^{2}$, it follows that~$ \mu \leq 0 $ (as long as there is~$t>0$ with~$u(t)\neq u_0$). In particular, assumption \eqref{eq:A1} is not fulfilled in this case. We may therefore assume that 
\[
\norm{u(t)-u_0}_{X}^{2} < t\left\langle \F(u_0),u(t)-u_0\right\rangle,
\]
for~$t>0$, and~$u(t)\neq u_0$.
\end{remark}

From~\eqref{eq:res} it follows that the residual decreases, i.e., $\|\F(u(t))\|_{X'}<\|F(u_0)\|_{X'}$, if~$\gamma(t)\in(0,1)$. For~$t>0$, this happens if there holds
\begin{equation}\label{eq:t}
\left(\frac{L}{2}\norm{\F(u_{0})}_{X'}-\mu^2\right)t<\mu\qquad (t>0).
\end{equation}
Therefore, if
\[
\frac{L}{2}\norm{\F(u_{0})}_{X'}\le \mu^2,
\]
then any value of~$t>0$ will lead to a reduction of the residual. Otherwise, \eqref{eq:t} can be satisfied as long as~$t$ is chosen sufficiently small; in the special case that $L\norm{\F(u_0)}_{X'}>\mu^2 $, it is elementary to verify that $\gamma(t) $ attains its minimum for
\[
t^\star=\frac{\mu}{L\norm{\F(u_0)}_{X'}-\mu^2}.
\]

\subsection{Pseudo Time Stepping}
In terms of the \ptc-scheme~\eqref{eq:ptc}, for~$n\ge 0$, our previous discussion translates into
\[
\|\F(u_{n+1})\|_{X'}\le\gamma_n\|\F(u_n)\|_{X'},
\]
with a reduction constant
\[
\gamma_n=\frac{1}{1+k_n\mu}\left(1+\frac{Lk_n^{2}}{2(1+k_n\mu)}\norm{\F(u_n)}_{X'}\right)>0;
\] 
cf. Theorem~\ref{thm:res}. If 
\[
\frac{L}{2}\norm{\F(u_{n})}_{X'}\le\mu^2,
\]
then any choice of~$k_n>0$ will imply that~$\gamma_n\in(0,1)$. Otherwise, for~$k_n$ sufficiently small so that
\[
\left(\frac{L}{2}\norm{\F(u_{0})}_{X'}-\mu^2\right)k_n<\mu,
\]
it holds that~$\gamma_n<1$. In particular, if~$L\norm{\F(u_n)}_{X'}>\mu^2$, then
\begin{equation}\label{eq:k*}
k_n^\star=\frac{\mu}{L\norm{\F(u_n)}_{X'}-\mu^2}
\end{equation}
results in a minimal value of~$\gamma_n$. For this value of~$k_n$, we apply~\eqref{eq:first-estimate} to infer the bound
\[
\norm{u_{n+1}-u_n}_{X}\leq \frac{k_n^\star}{1+k_n^\star\mu}\norm{\F(u_n)}_{X'}=\frac{\mu}{L}.
\]
Letting~$\delta_n=\nicefrac{(u_{n+1}-u_n)}{k_n^\star}$ be the increment in the \ptc-iteration~\eqref{eq:linear-implicit-scheme}, this leads to~$\norm{\delta_n}_X\le \nicefrac{\mu}{(k{_n^\star}L)}$, and, therefore,
\begin{equation}
\label{eq:computable}
k_n^\star \leq \frac{\mu}{L\norm{\delta_n}_{X}}.	
\end{equation}
This upper bound does not contain any dual norms, and can, thus, be employed as an approximation of~$k_n^\star$ in practice.

\begin{remark}
In an effort to replace~\eqref{eq:computable} by a computationally even more feasible bound (not involving the possibly unspecified constants~$\mu$ and~$L$), we proceed again along the lines of~\cite{5}. As in~\eqref{eq:use1}, for~$k_n>0$, we have
\[
\|\delta_n\|^2_X\le \langle\F(u_n),\delta_n\rangle-\mu k_n\|\delta_n\|^2_X.
\]
This motivates to define the computable quantity
\[
\bm{\mu}_n:=\frac{\left\langle \F(u_n),\delta_n\right\rangle-\norm{\delta_n}_{X}^2}{k_n\norm{\delta_n}_{X}^2}\geq \mu>0.
\]
Furthermore, similarly as in the proof of Theorem~\ref{thm:res}, we note that
\[
\langle\F(u_{n+1}),\delta_n\rangle=\|\delta_n\|_X^2+\int_{t_{n}}^{t_{n+1}}\langle(\F'(u(s))-\F'(u_n))\dot{u}(s),\delta_n\rangle\ds,
\]
where, for~$i\ge 1$, we let~$t_i=\sum_{j=0}^{i-1} k_j$. Then, by means of~\eqref{eq:A2}, it follows that
\begin{align*}
\frac{|\left\langle \F(u_{n+1}),\delta_n \right\rangle -\norm{\delta_n}_{X}^2|}{\norm{\delta_n}_{X}}
&\leq \int_{t_{n}}^{t_{n+1}}{\norm{(\F'(u(s))-\F'(u_n))\dot{u}(s)}_{X'}\ds}\\
&\leq L \int_{t_{n}}^{t_{n+1}}{\norm{u(s)-u_n}_{X}\norm{\dot{u}(s)}_{X}\ds}.
\end{align*}
Furthermore, using~\eqref{eq:aux3} and employing~Lemma~\ref{lm:Lemma1}, this transforms into
\begin{align*}
\frac{|\left\langle \F(u_{n+1}),\delta_n \right\rangle -\norm{\delta_n}_{X}^2|}{\norm{\delta_n}_{X}}
&\leq L\int_{t_{n}}^{t_{n+1}}\frac{(s-t_{n})^{-1}}{1+(s-t_n)\mu}\|u(s)-u_n\|_{X}^2\ds.
\end{align*}
Approximating the integral with the aid of the trapezoidal rule, and recalling that~$u_{n+1}-u_n=k_n\delta_n$, cf.~\eqref{eq:linear-implicit-scheme}, yields
\[
\frac{|\left\langle \F(u_{n+1}),\delta_n \right\rangle -\norm{\delta_n}_{X}^2|}{\norm{\delta_n}_{X}}\le \frac{L}{2}k_n^2\|\delta_n\|_X^2+\mathcal{O}(k_n^4).
\]
We then define
\[
\bm{L}_n:=\frac{2|\left\langle \F(u_{n+1}),\delta_n \right\rangle -\norm{\delta_n}_{X}^2|}{k_n^2\norm{\delta_n}_{X}^3}\leq L+\mathcal{O}(k_n^2).
\]
Replacing~$\mu$ and~$L$ in~\eqref{eq:computable} by~$\bm\mu_n$ and~$\bm L_n$, respectively, we are led to introduce the following pseudo time step
\begin{equation}
\label{eq:stepsize-control}
\bm{k}_n^\star=\frac{k_n}{2}\cdot \abs{\frac{	\left \langle \F(u_n),\delta_n \right \rangle-\norm{\delta_n}_{X}^{2}}{\left \langle \F(u_{n+1}),\delta_n \right \rangle -\norm{\delta_n}_{X}^2}},
\end{equation}
which does not require explicit knowledge on~$\mu$ and~$L$.
\end{remark}


\section{Application to Semilinear Problems}\label{sc:Well-Posedness-FEM}

In this section, we will apply the abstract setting from the previous section to the semilinear problem~\eqref{poisson}, with~$\F=\F_\varepsilon$ from~\eqref{eq:Fweak}.

\subsection{\ptc-Linearization}
For~$u_n\in X$ and~$k_n>0$, the \ptc-method~\eqref{eq:linear-implicit-scheme} is to find~$\delta_n \in X$ such that
\begin{equation}\label{eq:weak-formulation}
a_\varepsilon(u_{n},k_n;\delta_n,v) = \ell_\varepsilon(u_{n};v)\qquad\forall v\in X,
\end{equation}
and~$u_{n+1}=u_n+k_n\delta_n$, where, for {\em fixed}~$u\in X$, $t>0$, we consider the bilinear form
\begin{equation*}
\begin{aligned}
a_{\varepsilon}(u,t;\delta,v)&:=(\delta, v)_{X}
-t\int_{\Omega}{\{f'(u)\delta v-\varepsilon \nabla \delta \cdot \nabla v\}\dx},\qquad\delta,v\in X,
\end{aligned}
\end{equation*}
as well as the linear form
\begin{align*}
\ell_{\varepsilon}(u;v)&:=\int_{\Omega}{\{f(u)v-\varepsilon \nabla u \cdot \nabla v\}\dx},\qquad v\in X.
\end{align*}

Throughout, for given~$u_n$, $n\ge 0$, we assume that \eqref{eq:linear-implicit-scheme} has a unique solution~$u_{n+1}$. In fact, this property can be made rigorous if certain assumptions on the nonlinearity~$f$ are satisfied. This will be addressed in the ensuing two propositions.

\begin{proposition}\label{pr:f}
If~$\sigma_f:=\sup_{x\in\mathbb{R}}f'(x)<\varepsilon C_{\P}^{-2}$, where~$C_{\P}=C_{\P}(\Omega)$ is the constant in the Poincar\'e inequality on $\Omega$,
\begin{equation}
\label{eq:Poincare}
\norm{w}_{0}\leq C_{\P}\norm{\nabla w}_{0}, \quad \forall w \in X,
\end{equation}
then~\eqref{eq:A1} is satisfied with
\[
\mu=\frac{\varepsilon C_{\P}^{-2}-\sigma_f}{\varepsilon C_{\P}^{-2}+1}>0.
\]
\end{proposition}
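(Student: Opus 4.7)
The plan is to evaluate $\langle \F'_\varepsilon(u_0)x,x\rangle$ directly from its definition, to pointwise bound $f'(u_0)\le\sigma_f$, and then to split the Dirichlet term $-\varepsilon\|\nabla x\|_0^2$ so that, after applying Poincaré, the resulting expression has exactly the shape of $-\mu\NN{x}_\varepsilon^2$.

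Concretely, I would first write
\[
\langle \F'_\varepsilon(u_0)x,x\rangle=\int_\Omega f'(u_0)x^2\dx-\varepsilon\|\nabla x\|_0^2\le \sigma_f\|x\|_0^2-\varepsilon\|\nabla x\|_0^2,
\]
valid for any $x\in X$. Next, introduce a parameter $\theta\in(0,1)$ to be tuned, split
\[
-\varepsilon\|\nabla x\|_0^2=-\theta\varepsilon\|\nabla x\|_0^2-(1-\theta)\varepsilon\|\nabla x\|_0^2,
\]
and use the Poincaré inequality~\eqref{eq:Poincare} on the second piece in the form $\|\nabla x\|_0^2\ge C_{\P}^{-2}\|x\|_0^2$. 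This gives
\[
\langle \F'_\varepsilon(u_0)x,x\rangle\le \bigl[\sigma_f-(1-\theta)\varepsilon C_{\P}^{-2}\bigr]\|x\|_0^2-\theta\varepsilon\|\nabla x\|_0^2.
\]

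To match the definition $\NN{x}_\varepsilon^2=\varepsilon\|\nabla x\|_0^2+\|x\|_0^2$, impose that both coefficients equal $-\mu$, i.e.\ $\theta=\mu$ and $\sigma_f-(1-\mu)\varepsilon C_{\P}^{-2}=-\mu$. Solving this single linear equation for $\mu$ yields precisely
\[
\mu=\frac{\varepsilon C_{\P}^{-2}-\sigma_f}{1+\varepsilon C_{\P}^{-2}},
\]
which is positive thanks to the hypothesis $\sigma_f<\varepsilon C_{\P}^{-2}$, and automatically lies in $(0,1)$ so that the choice $\theta=\mu\in(0,1)$ is admissible. Taking the supremum over $\|x\|_X=1$ then gives~\eqref{eq:A1}.

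There is no real obstacle here; the only subtlety is being careful that the splitting parameter $\theta$ is chosen in $(0,1)$ (which is guaranteed by the standing hypothesis) and that the $\mu$ obtained is independent of $x$. The proof is essentially a one-line Poincaré-plus-algebra argument tailored to the weighted $\varepsilon$-norm on $X$.
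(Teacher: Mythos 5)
Your argument is correct and is essentially the paper's own proof: the paper introduces $\zeta=\nicefrac{(1+\sigma_f)}{(\varepsilon C_{\P}^{-2}+1)}$ and splits $-\varepsilon\norm{\nabla v}_0^2=(\zeta-1)\varepsilon\norm{\nabla v}_0^2-\zeta\varepsilon\norm{\nabla v}_0^2$ before applying Poincar\'e to the second piece, which is exactly your decomposition with $\theta=1-\zeta=\mu$. The only caveat --- shared by the paper, whose proof likewise needs $\zeta\ge 0$ for the Poincar\'e step to go in the right direction --- is that your claim that $\mu$ ``automatically'' lies in $(0,1)$ additionally requires $\sigma_f>-1$.
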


\begin{proof}
Let us set
\[
\zeta:=\frac{1+\sigma_f}{\varepsilon C_{\P}^{-2}+1}.
\]
By our assumptions, there holds that~$\zeta<1$. Then, with~\eqref{eq:Poincare}, for~$u,v\in X$, we have 
\begin{align*}
\langle\F_\varepsilon'(u)v,v\rangle&=(\zeta-1)\varepsilon\|\nabla v\|_0^2-\zeta\varepsilon\|\nabla v\|_0^2+\int_\Omega f'(u)v^2\dx\\
&\le(\zeta-1)\varepsilon\|\nabla v\|_0^2+\int_\Omega \{f'(u)-\zeta C_{\P}^{-2}\varepsilon\}v^2\dx\\
&\le(\zeta-1)\NN{v}^2_\varepsilon
=-\mu \NN{v}^2_\varepsilon.
\end{align*}
Hence, \eqref{eq:A1} is verified.
\end{proof}

\begin{remark}
Within a given \ptc-iteration, for~$n\ge 0$, the proof of the above result reveals that~$\sigma_f$ can be replaced by the possibly sharper value~$\sigma_f:=\sup_{\Omega}f'(u_n)$.
\end{remark}

\begin{proposition}\label{pr:f2}
If $f'$ is globally Lipschitz continuous with Lipschitz constant $L_{f'}$, that is,
\begin{equation}\label{eq:f'Lip}
|f'(u_1)-f'(u_2)|\le L_{f'}|u_1-u_2|\qquad\forall u_1,u_2\in\mathbb{R},
\end{equation}
then~\eqref{eq:A2} is fulfilled with $L:=CL_{f'}\varepsilon^{-1}$,
where~$C>0$ is a constant only depending on~$\Omega$.
\end{proposition}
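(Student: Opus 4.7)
The plan is to unfold the definition of the Fréchet derivative, observe that the $\varepsilon$-diffusion terms cancel in the difference $\F_\varepsilon'(x)-\F_\varepsilon'(y)$, and then bound the remaining lower-order term by elementary Hölder/Sobolev arguments. Concretely, for any $w,v\in X$,
\[
\langle(\F_\varepsilon'(x)-\F_\varepsilon'(y))w,v\rangle=\int_\Omega(f'(x)-f'(y))\,w\,v\dx,
\]
since the bilinear $\varepsilon\nabla w\cdot\nabla v$ parts are independent of the linearization point and drop out. Invoking the Lipschitz hypothesis \eqref{eq:f'Lip} pointwise reduces the task to estimating $\int_\Omega|x-y|\,|w|\,|v|\dx$.

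Next, I would control this $L^1$-triple integral by generalized Hölder combined with the Sobolev embedding $H^1(\Omega)\hookrightarrow L^4(\Omega)$, valid both for $d=1$ (where in fact $H^1\hookrightarrow L^\infty$) and for $d=2$ on a Lipschitz polygon. This yields
\[
\int_\Omega|x-y|\,|w|\,|v|\dx\le \|x-y\|_{L^4(\Omega)}\|w\|_{L^4(\Omega)}\|v\|_{L^2(\Omega)}\le C_\Omega\|x-y\|_{H^1(\Omega)}\|w\|_{H^1(\Omega)}\|v\|_{L^2(\Omega)},
\]
with a constant $C_\Omega$ depending only on $\Omega$. The final step is to re-express the full $H^1$-norms in terms of $\NN{\cdot}_\varepsilon$; from the very definition of $\NN{\cdot}_\varepsilon$, one has
\[
\|u\|_{0}\le\NN{u}_\varepsilon,\qquad \|\nabla u\|_{0}\le\varepsilon^{-\nicefrac12}\NN{u}_\varepsilon,
\]
so that $\|u\|_{H^1(\Omega)}\le C\,\varepsilon^{-\nicefrac12}\NN{u}_\varepsilon$ (for $\varepsilon\le 1$; otherwise the bound is even better). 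Substituting these estimates and taking the supremum over $w,v$ with $\NN{w}_\varepsilon=\NN{v}_\varepsilon=1$ produces
\[
\|\F_\varepsilon'(x)-\F_\varepsilon'(y)\|_{\mathcal{L}(X;X')}\le C\,L_{f'}\,\varepsilon^{-1}\,\NN{x-y}_\varepsilon,
\]
which is exactly \eqref{eq:A2} with $L=CL_{f'}\varepsilon^{-1}$.

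The only genuinely delicate point is that the loss $\varepsilon^{-1}$ must be absorbed in the constant $L$ itself rather than in the argument, which is why two factors of $\varepsilon^{-\nicefrac12}$ (one coming from $\|x-y\|_{H^1}$ and one from $\|w\|_{H^1}$) are allocated to the triple integral, whereas the third factor is safely estimated in $L^2$ and therefore directly by $\NN{v}_\varepsilon$. In the one-dimensional case one can alternatively pull $|x-y|$ out in $L^\infty$, using the embedding $H^1\hookrightarrow L^\infty$, and obtain an even sharper power of $\varepsilon$; but since the statement only claims $L=CL_{f'}\varepsilon^{-1}$, the two-dimensional argument above suffices uniformly in $d\in\{1,2\}$.
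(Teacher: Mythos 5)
Your argument is correct and follows essentially the same route as the paper: the authors likewise cancel the $\varepsilon$-diffusion terms, apply the Lipschitz bound pointwise, and invoke a generalized H\"older/Sobolev estimate (packaged as \cite[Lemma~A.1]{AmreinWihler:15}) of the form $\|gwv\|_{L^1(\Omega)}\preccurlyeq\|g\|_{0}\|\nabla w\|_0\|\nabla v\|_0$ before converting to the $\NN{\cdot}_\varepsilon$-norm. The only difference is the allocation of exponents: the paper measures $u_1-u_2$ in $L^2$ (so $\|u_1-u_2\|_0\le\NN{u_1-u_2}_\varepsilon$ costs no power of $\varepsilon$) and both $w$ and $v$ in $L^4$ via their gradients, each contributing $\varepsilon^{-\nicefrac12}$, whereas you place $x-y$ and $w$ in $L^4$ and $v$ in $L^2$; both allocations yield the same total factor $\varepsilon^{-1}$. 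One small imprecision: your conversion $\|u\|_{H^1(\Omega)}\le C\varepsilon^{-\nicefrac12}\NN{u}_\varepsilon$ holds only for $\varepsilon\le1$, and for $\varepsilon>1$ your argument delivers the constant $CL_{f'}$ rather than the claimed $CL_{f'}\varepsilon^{-1}$; this is repaired uniformly in $\varepsilon>0$ by estimating $\|\cdot\|_{L^4(\Omega)}\le C\|\nabla\cdot\|_{0}\le C\varepsilon^{-\nicefrac12}\NN{\cdot}_\varepsilon$ via the Poincar\'e inequality, which applies since all functions involved belong to $H^1_0(\Omega)$.
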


\begin{proof}
For $u_1,u_2,w,v \in X$ there holds
\[
|\left\langle (\F'(u_1)-\F'(u_2))w,v \right \rangle| 
\le\norm{(f'(u_1)-f'(u_2))wv}_{L^1(\Omega)}.
\]
Employing~\cite[Lemma~A.1]{AmreinWihler:15}, and applying the Lipschitz continuity~\eqref{eq:f'Lip}, we obtain
\begin{align*}
|\left\langle (\F'(u_1)-\F'(u_2))w,v \right \rangle| 
&\le C\|f'(u_1)-f'(u_2)\|_{0}\|\nabla w\|_0\|\nabla v\|_0\\
&\le CL_{f'}\|u_1-u_2\|_{0}\|\nabla w\|_0\|\nabla v\|_0\\
&\le CL_{f'}\varepsilon^{-1}\NN{u_1-u_2}_\varepsilon\NN{w}_\varepsilon\NN{v}_\varepsilon,
\end{align*}
for a constant~$C>0$ only depending on~$\Omega$. This verifies~\eqref{eq:A2}.
\end{proof}

\begin{remark}
If the assumptions in the above Propositions~\ref{pr:f} and~\ref{pr:f2} are satisfied, and if for given $u_n\in X$ there holds $f(u_{n}) \in L^{2}(\Omega)$, then the linear problem \eqref{eq:weak-formulation} has a unique solution $\delta_{n}\in X$; cf.~Proposition~\ref{pr:lm}. 
\end{remark}

\subsection{\ptc-Galerkin Discretization}
In order to provide a numerical approximation of~\eqref{poisson}, we will discretize the \emph{linear} weak formulation~\eqref{eq:weak-formulation} by means of a finite element method, which, in combination with the \ptc-iteration, constitutes a \ptc-Galerkin approximation scheme. Furthermore, we shall derive {\em a posteriori} residual estimates for the finite element 
discretization which allow for an adaptive refinement of the meshes in each \ptc-step. This, together with the adaptive prediction strategy from Section~\ref{sec:prediction}, leads to a fully adaptive \ptc-Galerkin discretization method for~\eqref{poisson}.

\subsubsection{Finite Element Meshes and Spaces}
Let $ \mathcal{T}^h=\{T\}_{T\in\mathcal{T}^h}$ be a regular and shape-regular mesh partition of $\Omega $ into disjoint open simplices, i.e., any~$T\in\mathcal{T}^h$ is an affine image of the (open) reference simplex~$\widehat T=\{\widehat x\in\mathbb{R}_+^d:\,\sum_{i=1}^d\widehat x_i<1\}$. By~$h_T=\mathrm{diam}(T)$ we signify the element diameter of~$T\in\mathcal{T}^h$, and by $h=\max_{T\in\mathcal{T}^h}h_T$ the mesh size. Furthermore, by $\mathcal{E}^h$ we denote the set of all interior mesh nodes for~$d=1$ and interior (open) edges for~$d=2$ in~$\mathcal{T}^h$. In addition, for~$T\in\mathcal{T}^h$, we let~$\mathcal{E}^h(T)=\{E\in\mathcal{E}^h:\,E\subset\partial T\}$. For~$E\in\mathcal{E}^h$, we let~$h_E$ be the mean of the lengths of the adjacent elements in 1d, and the length of~$E$ in~2d. Let us also define the following two quantities:
\begin{equation}\label{boundary}
\begin{aligned}
\alpha_T&:=\min(1,\varepsilon^{-\nicefrac12}h_T),\qquad
\alpha_E:=\min(1,\varepsilon^{-\nicefrac12}h_E),
\end{aligned}
\end{equation}
for~$T\in\mathcal{T}^h$ and~$E\in\mathcal{E}^h$, respectively.

We consider the finite element space of continuous, piecewise linear functions on $\mathcal{T}^h$ with zero trace on~$\partial\Omega$, given by
\begin{equation*}
V_{0}^{h}:=\{\varphi\in H^1_0(\Omega):\,\varphi|_{T} \in \mathbb{P}_{1}(T) \, \forall T \in \mathcal{T}^h\},
\end{equation*}
respectively, where~$\mathbb{P}_1(T)$ is the standard space of all linear polynomial functions on~$T$. 

\subsubsection{Linear Finite Element Discretization}
For given~$k_n>0$ and $u_n^{h}\in V_{0}^{h}$, $n\ge 0$, we consider the finite element approximation of~\eqref{eq:weak-formulation}, which is to find~$\delta_{n}^h\in V_{0}^h$ such that
\begin{equation}\label{eq:fem}
a_\varepsilon(u_n^h,k_{n};\delta_{n}^h,v)=\ell_\varepsilon(u_n^h;v)\qquad\forall v\in V_{0}^{h};
\end{equation}
for $n=0$, the function~$u_0^{h}\in V_{0}^{h}$ is a prescribed initial guess.
Introducing the linearization operator
\[
\T_{f}(u):=f(u_{n}^{h})+f'(u_{n}^{h})(u-u_{n}^{h}),
\]
as well as
\[
\uh_{n+1}^{h}:=u_{n}^{h}+k_n\delta_{n}^{h},
\]
and rearranging terms, \eqref{eq:fem} can be rewritten as
\begin{equation}
\label{eq:start}
\begin{aligned}
\int_{\Omega}{\varepsilon\nabla \uh_{n+1}^{h} \cdot \nabla v\dx} = \int_{\Omega}{(\T_{f}(u_{n+1}^{h})-\delta_{n}^{h}) v}\dx,
\end{aligned}
\end{equation}
for any~$v  \in V_{0}^h$.

\subsection{{\em A Posteriori} Residual Analysis}
The aim of this section is to derive {\em a posteriori} residual bounds for the linearized FEM~\eqref{eq:fem}.

\subsubsection{\emph{A Posteriori} Residual Bound}
In order to measure the discrepancy between the finite element discretization~\eqref{eq:fem} and the original problem~\eqref{poisson}, a natural quantity 
to bound is the residual~$\F_{\varepsilon}(u_{n+1}^{h})$ in~$X'$. Let $\I:\,H_{0}^{1}(\Omega)\rightarrow V_{0}^{h} $ be the quasi-interpolation operator of Cl\'ement (see, e.g., \cite[Corollary~4.2]{AmreinWihler:15}). Then, testing~\eqref{eq:start} with~$\I v\in V^h_0$, for an arbitrary~$v\in X$, implies that
\[
\int_{\Omega}{\varepsilon\nabla \uh_{n+1}^{h} \cdot \nabla \I v\dx} = \int_{\Omega}{(\T_{f}(u_{n+1}^{h}) -\delta_{n}^{h})\I v\dx}.
\]
Then, there holds the identity
\begin{align*}
\left \langle \F_{\varepsilon}(\uh_{n+1}^{h}),v \right \rangle 
&=\int_\Omega {\varepsilon\nabla \uh_{n+1}^{h}\cdot\nabla (\I v-v)}\dx
+\int_\Omega (\T_{f}(u_{n+1}^h)-\delta_{n}^{h}) (v-\I v)\dx\\
&\quad-\int_\Omega\left\{\T_{f}(u_{n+1}^h)-\delta_{n}^{h}-f(\uh_{n+1}^{h})\right\}v\dx,
\end{align*}
for any~$v\in X$. Integrating by parts in the first term on the right-hand side, recalling the fact that~$(v-\I v)=0$ on~$\partial\Omega$, and applying some elementary calculations, yields that
\[
\left \langle \F_{\varepsilon}(\uh_{n+1}^{h}),v \right \rangle
=\sum_{E\in\mathcal{E}^h}a_E+\sum_{T\in\mathcal{T}^h}(b_T-c_T),
\]
where
\begin{equation}
\begin{aligned}
a_{E}&:= \int_{E}\varepsilon\jmp{\nabla \uh_{n+1}^{h}}(\I v-v)\ds,\qquad c_{T}:=\int_{T}{\left\{\T_{f}(u_{n+1}^h)-\delta_{n}^{h}-f(\uh_{n+1}^{h})\right\}v}\dx,\nonumber \\
b_{T}&:=\int_{T}{\left\{\varepsilon \Delta \uh_{n+1}^{h}+\T_{f}(u_{n+1}^h)-\delta_{n}^{h}\right\}(v-\I v)}\dx,
\end{aligned}
\end{equation}
with~$E\in\mathcal{E}^h$, $T\in\mathcal{T}^h$. Here, for any edge $ E=\partial T^\sharp\cap \partial T^\flat \in \mathcal{E}^h $ shared by two neighboring elements~$T^\sharp, T^\flat\in\mathcal{T}^h$, where $\bm n^\sharp$ and~$\bm n^\flat$ signify the unit outward vectors on~$\partial T^\sharp$ and~$\partial T^\flat$, respectively, we denote by 
\[
\jmp{\nabla u_{n+1}^h}(\bm x)=\lim_{t\to 0^+}\nabla u_{n+1}^h(\bm x+t\bm n^\sharp)\cdot\bm n^\sharp+\lim_{t\to 0^+}\nabla u_{n+1}^h(\bm x+t\bm n^\flat)\cdot\bm n^\flat,\qquad \bm x\in E,
\]
the jump across~$E$. Then, for~$T\in\mathcal{T}^h$, defining the linearization residual
\begin{equation}
\label{linearizationerror}
R_{n,T}:=\norm{\T_{f}(u_{n+1}^h)-\delta_{n}^{h}-f(\uh_{n+1}^{h})}_{0,T},
\end{equation}
as well as the FEM approximation residual
\begin{equation}
\label{Femerror}
\eta_{n,T}^2:= \alpha_{T}^2 \norm{\varepsilon \Delta \uh_{n+1}^{h}+\T_f(u_{n+1}^h)-\delta_{n}^{h}}_{0,T}^2+\frac{1}{2}\sum_{E\in \mathcal{E}^{h}(T)}{\varepsilon^{-\nicefrac12}\alpha_E\norm{\varepsilon \jmp{\nabla \uh_{n+1}^{h} }}_{0,E}^2},
\end{equation}
with~$\alpha_T$ and~$\alpha_E$ from~\eqref{boundary}, we proceeding along the lines of the proof of~\cite[Theorem~4.4]{AmreinWihler:15} in order to obtain the following result.

\begin{theorem}
\label{thm:1}
For~$n\ge 0$ there holds the upper \emph{a posteriori} residual bound
\begin{equation}
\label{eq:upperbound}
\norm{\F(\uh_{n+1}^{h})}_{X'}^2
\preccurlyeq R_{n,\Omega}^2+\sum_{T\in \mathcal{T}^h}{\eta_{n,T}^{2}},
\end{equation}
with~$R_{n,\Omega}$ and~$\eta_{n,T}$, $T\in\mathcal{T}^h$, from~\eqref{linearizationerror} and~\eqref{Femerror}, respectively.
\end{theorem}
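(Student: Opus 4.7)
The plan is to estimate the three contributions $a_E$, $b_T$, $c_T$ in the identity
\[
\langle\F_{\varepsilon}(u_{n+1}^{h}),v\rangle=\sum_{E\in\mathcal{E}^h}a_E+\sum_{T\in\mathcal{T}^h}(b_T-c_T)
\]
already derived in the excerpt, then take the supremum over $v\in X$ with $\NN{v}_{\varepsilon}=1$ to read off the dual norm of $\F_\varepsilon(u_{n+1}^h)$. The essential ingredients are the $\varepsilon$-robust Clément interpolation estimates from \cite[Corollary~4.2]{AmreinWihler:15}, which, for any $v\in X$, furnish the elementwise and edgewise bounds
\[
\norm{v-\I v}_{0,T}\preccurlyeq \alpha_T\NN{v}_{\varepsilon,\omega_T},\qquad \norm{v-\I v}_{0,E}\preccurlyeq \varepsilon^{-\nicefrac14}\alpha_E^{\nicefrac12}\NN{v}_{\varepsilon,\omega_E},
\]
on suitable patches $\omega_T\supset T$ and $\omega_E\supset E$ of elements sharing a vertex with $T$ or $E$; the weights $\alpha_T,\alpha_E$ are precisely those appearing in \eqref{boundary} and are the reason for their presence in the definition~\eqref{Femerror} of $\eta_{n,T}$.

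First I would bound the element contribution $b_T$ by Cauchy--Schwarz in $L^2(T)$ and the interpolation inequality above, producing a factor $\alpha_T\|\varepsilon\Delta u_{n+1}^h+\T_f(u_{n+1}^h)-\delta_n^h\|_{0,T}\NN{v}_{\varepsilon,\omega_T}$. Next, for the edge contribution $a_E$, Cauchy--Schwarz on $E$ together with the edge-interpolation estimate yields the factor $\varepsilon^{-\nicefrac14}\alpha_E^{\nicefrac12}\|\varepsilon\jmp{\nabla u_{n+1}^h}\|_{0,E}\NN{v}_{\varepsilon,\omega_E}$. For the purely volumetric term $c_T$ I would simply use Cauchy--Schwarz and $\|v\|_{0,T}\le\NN{v}_{\varepsilon,T}$ to recover $R_{n,T}\NN{v}_{\varepsilon,T}$ directly. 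At this point summation over $T$ respectively $E$, combined with the finite-overlap property of the patches $\{\omega_T\}$, $\{\omega_E\}$ and a discrete Cauchy--Schwarz inequality, gives
\[
|\langle\F_{\varepsilon}(u_{n+1}^h),v\rangle|\preccurlyeq\Bigl(R_{n,\Omega}^2+\sum_{T\in\mathcal{T}^h}\eta_{n,T}^2\Bigr)^{\nicefrac12}\NN{v}_{\varepsilon},
\]
where the $\varepsilon^{-\nicefrac12}$ weight inside $\eta_{n,T}^2$ absorbs the $\varepsilon^{-\nicefrac14}$ from the edge interpolation estimate (squared). Dividing by $\NN{v}_\varepsilon$ and passing to the supremum concludes the argument.

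The main technical obstacle is getting the $\varepsilon$-dependence right in the edge term: one has to combine the interpolation factor $\varepsilon^{-\nicefrac14}\alpha_E^{\nicefrac12}$ with the factor $\varepsilon$ sitting in front of $\jmp{\nabla u_{n+1}^h}$ so that the squared contribution is exactly $\varepsilon^{-\nicefrac12}\alpha_E\|\varepsilon\jmp{\nabla u_{n+1}^h}\|_{0,E}^2$, matching \eqref{Femerror}. Everything else is a straightforward bookkeeping exercise of Cauchy--Schwarz and finite-overlap summation, and, since the argument is a verbatim transcription of \cite[Theorem~4.4]{AmreinWihler:15} with the Newton residual replaced by $R_{n,T}$ and the linearized equation \eqref{eq:start} in place of the Newton-linearized equation used there, I would simply refer to that proof for the remaining details once the relevant adjustments have been made explicit.
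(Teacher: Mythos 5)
Your proposal is correct and follows essentially the same route as the paper, which itself only sketches the argument by starting from the identity $\langle\F_{\varepsilon}(u_{n+1}^{h}),v\rangle=\sum_{E}a_E+\sum_{T}(b_T-c_T)$ and referring to the proof of \cite[Theorem~4.4]{AmreinWihler:15}; your termwise treatment via the robust Cl\'ement estimates $\norm{v-\I v}_{0,T}\preccurlyeq\alpha_T\NN{v}_{\varepsilon,\omega_T}$ and $\norm{v-\I v}_{0,E}\preccurlyeq\varepsilon^{-\nicefrac14}\alpha_E^{\nicefrac12}\NN{v}_{\varepsilon,\omega_E}$, followed by finite-overlap summation and discrete Cauchy--Schwarz, is precisely the intended argument, and your bookkeeping of the $\varepsilon$-powers in the edge term correctly reproduces the weight $\varepsilon^{-\nicefrac12}\alpha_E$ in \eqref{Femerror}.
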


\begin{remark}
Following our approach in~\cite[Theorem~4.5]{AmreinWihler:15}, under certain conditions on the nonlinearity~$f$, it can be shown that the right-hand side of the above bound~\eqref{eq:upperbound} is equivalent to the error norm~$\NN{u-u_{n+1}^{h}}_{\varepsilon,\Omega}$. 
\end{remark}

\begin{remark}
In addition to the upper estimate in the above Theorem~\ref{thm:1}, we notice that local lower \emph{a posteriori} residual bounds can be established for the proposed \ptc-Galerkin method as well. Indeed, this can be accomplished similarly to our analysis in~\cite[\S4.4.2]{AmreinWihler:15} (see also~\cite{Verfuerth}), which is based on the application of standard bubble function techniques.
\end{remark}
	
\subsection{A Fully Adaptive \ptc-Galerkin Algorithm}
We will now propose a procedure that will combine the \ptc-method presented in~Section~\ref{sec:prediction} with an automatic finite element mesh refinement strategy. More precisely, based on the \emph{a posteriori} residual bound from Theorem~\ref{thm:1}, the main idea of our approach is to provide an interplay between \ptc-iterations and adaptive mesh refinements which is based on monitoring the two residuals in~\eqref{linearizationerror} and~\eqref{Femerror}, and on acting according to whatever quantity is dominant in the current computations. We make the assumption that the \ptc-Galerkin sequence $\left\{u_{n+1}^{h}\right\}_{n\ge 0} $ given by~\eqref{eq:fem}, with step size~$k_n^\star$ from~\eqref{eq:k*} (or~$\bm{k}^\star_n$ from \eqref{eq:stepsize-control}) is well-defined as long as the iterations are being performed. The individual computational steps are summarized in Algorithm~\ref{al:full}.

\begin{algorithm}
\caption{Fully-adaptive \ptc-Galerkin method}
\label{al:full}
\begin{algorithmic}[1]
\State Given a parameter~$\theta>0$, a (coarse starting) triangulation $ \mathcal{T}^h $ of~$\Omega$, an initial step size $k_0>0$, a maximal number of degrees of freedom $\text{DOF}_{\text{max}}$, and an initial guess $ u_{0}^{h} \in  V_{0}^{h} $. Set~$n\gets 0$.
\While {$\text{DOF}\le\text{DOF}_{\text{max}}$} 
\myState {Compute the FEM solution~$u_{n+1}^h$ from~\eqref{eq:fem} on the mesh~$\mathcal{T}^h$.}
\myState {Evaluate the corresponding residual indicators $ \eta_{n,T} $, $ T \in \mathcal{T}^h $, and $R_{n,\Omega} $ from~\eqref{linearizationerror} and~\eqref{Femerror}, respectively.
}
\If{%
\[
R_{n,\Omega}^2\le \theta\sum_{T\in \mathcal{T}^h}{\eta_{n,T}^2}
\]
\quad}
\myStateDouble{refine the mesh $ T \in \mathcal{T}^h $ adaptively based on the elementwise residual indicators~$\eta_{n,T}$, $T\in\mathcal{T}^h$ from Theorem~\ref{thm:1}, and go back to step~({\footnotesize 2:}) with the previously computed solution~$u_{n+1}^{h}$ as interpolated on the refined mesh;}
\Else \myStateDouble{perform another \ptc-step based on the new step size $k_n=\bm{k}_n^\star$ as proposed in~\eqref{eq:stepsize-control} and go back to~({\footnotesize 3:}).}
\EndIf
\myState{set~$n\leftarrow n+1$.}
\EndWhile
\end{algorithmic}
\end{algorithm}
\subsection{Numerical Experiments}
\label{sec:numerics}
We will now illustrate and test the above fully adaptive Algorithm~\ref{al:full} with two numerical experiments in 2d. The linear systems resulting from the finite element discretization~\eqref{eq:start} are solved by means of a direct solver.

\begin{example}\label{ex:1}
Let us consider first the Sine-Gordon type problem
\begin{equation*}
-\varepsilon\Delta u = -\sin(u)-u+1,\ \text{in }\Omega=(0,1)^2,\qquad
u=0\ \text{on }\partial\Omega.
\end{equation*}
Here, $f(u)=-\sin(u)-u+1$, and~$f'(u)=-\cos(u)-1$. In particular, by application of Proposition~\ref{pr:f}, we observe that the structural assumptions~\eqref{eq:A1} and~\eqref{eq:A2} are fulfilled.
Neclecting the boundary conditions for a moment, one observes that the unique positive zero $ u \approx 0.51$ of $ f(u) $ is a solution of the PDE. We therefore expect boundary layers along $\partial \Omega$; see Figure \ref{Sine-Gordon-Mesh} (right). Moreover, the focus of this experiment is on the robustness of the \emph{a posteriori} residual bound \eqref{eq:upperbound} with respect to the singular perturbation paramater $\varepsilon$ as 
$\varepsilon \to 0$. Starting from the initial mesh depicted in Figure \ref{Sine-Gordon-Mesh} (left) with $u_{0}^{h}(\nicefrac{1}{2},\nicefrac{1}{2})\approx \nicefrac{1}{2}$, we test the fully adaptive \ptc-Galerkin Algorithm \ref{al:full} for different choices of $\varepsilon=\{10^{-i}\}_{i=0}^{9}$. 
In Algorithm \ref{al:full} the parameters are chosen to be $\theta = 0.5$ and $ k_0=1$. As $\varepsilon\to 0 $ the resulting solutions feature ever stronger boundary layers; see Figure \ref{Sine-Gordon-Mesh} (right). The performance data in Figure 
\ref{Performance-Data} (left) shows that the residuals decay, firstly, robust in $\varepsilon$, and, secondly, of (optimal) order~$\nicefrac{1}{2}$ with respect to the number of degrees of freedom. 
\end{example}

\begin{figure}
\includegraphics[width=0.43\textwidth]{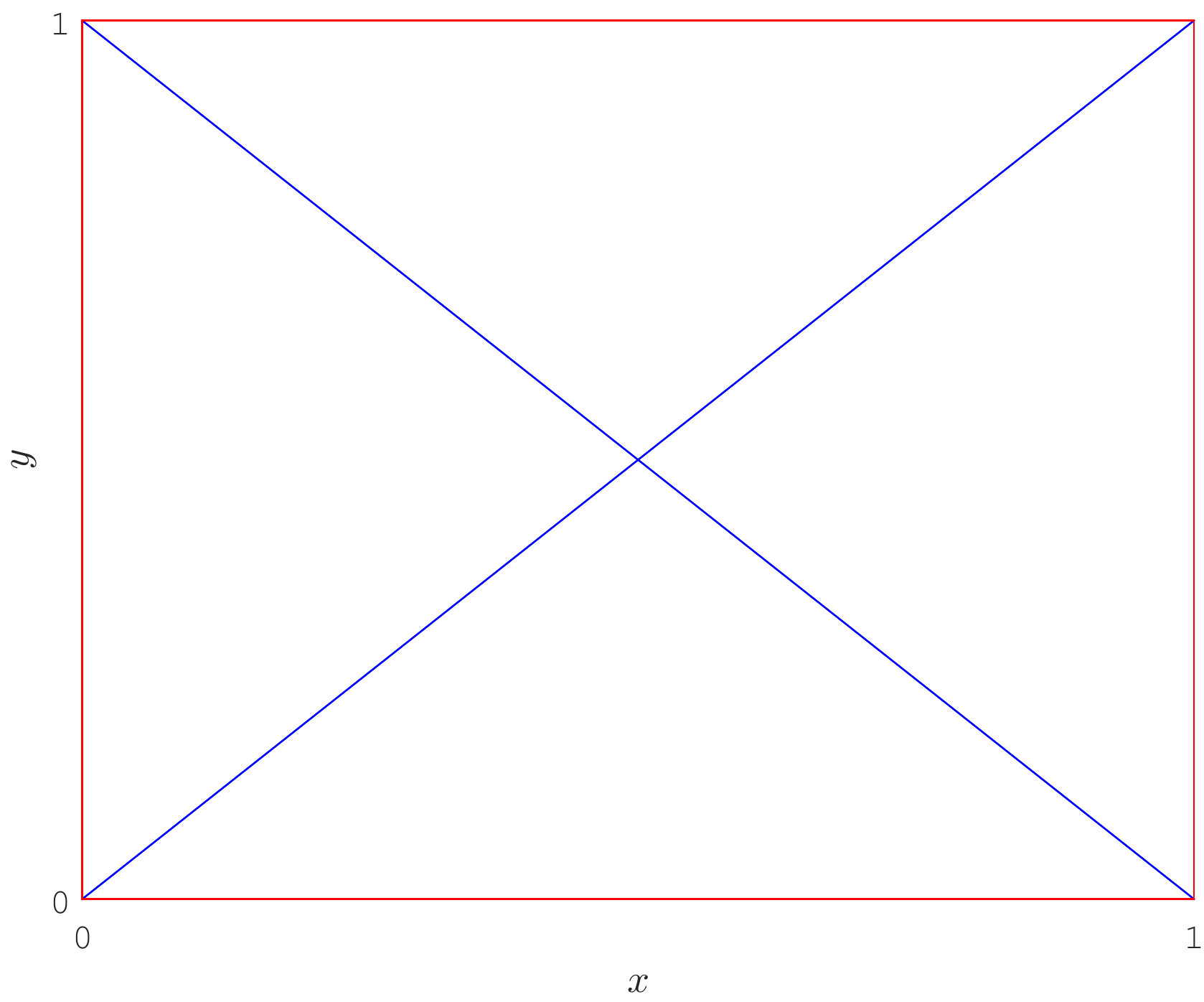}
\hfill
\includegraphics[width=0.43\textwidth]{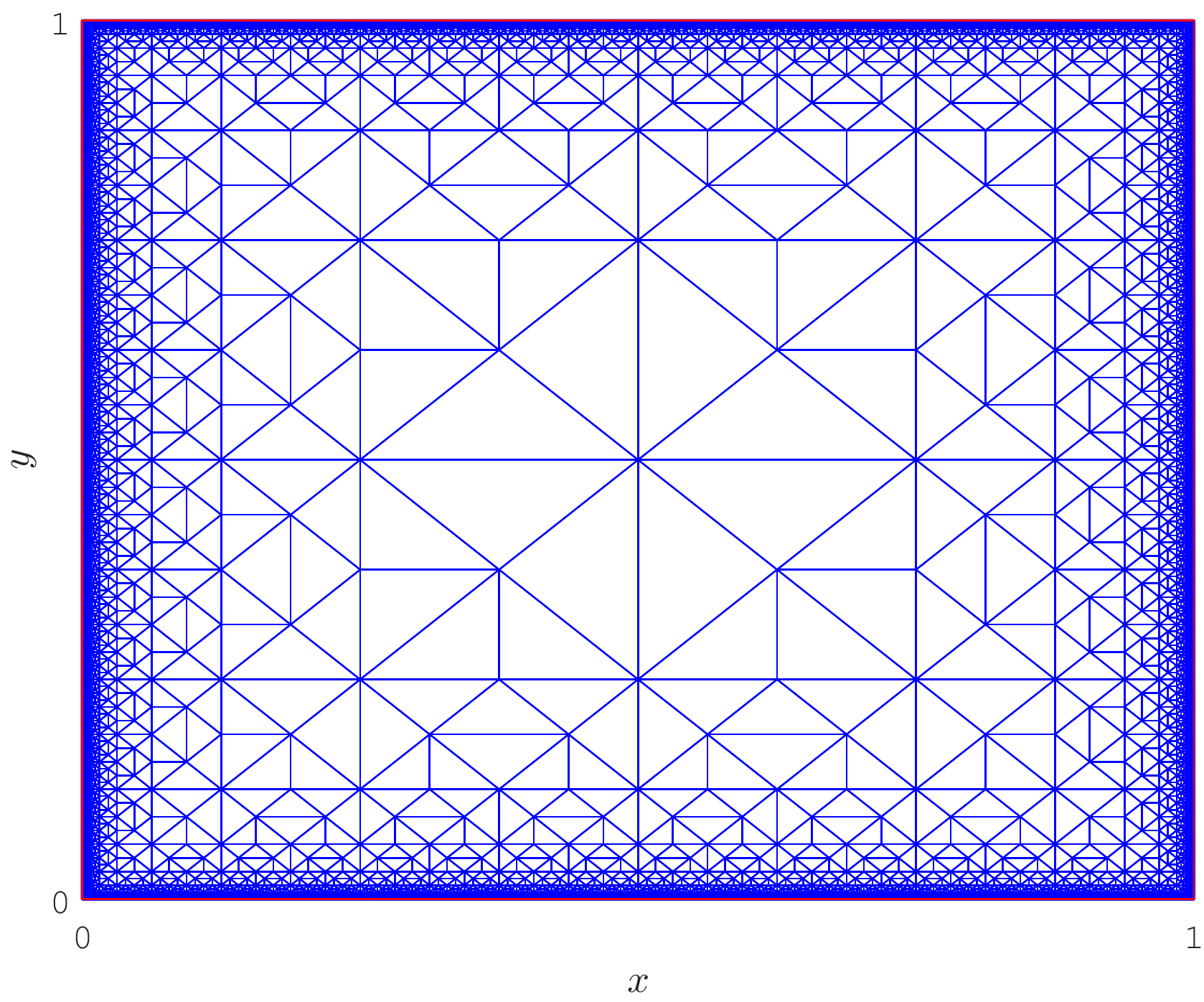}
\caption{Example~\ref{ex:1} for $\varepsilon=10^{-7}$: Initial mesh (left), and the adaptively refined mesh resolving the solution (right).}
\label{Sine-Gordon-Mesh}
\end{figure}

\begin{figure}
\includegraphics[width=0.45\textwidth]{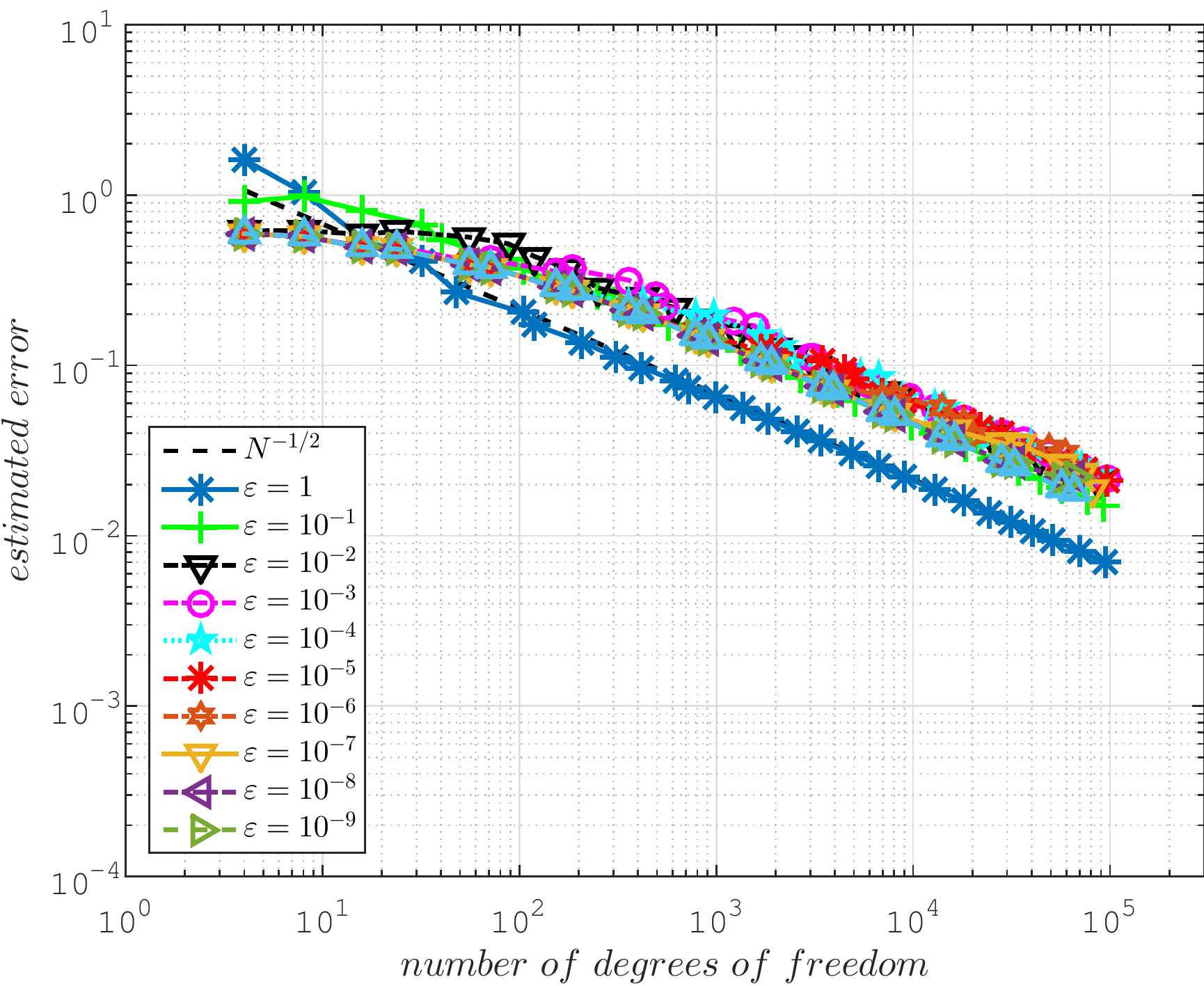}
\hfill
\includegraphics[width=0.45\textwidth]{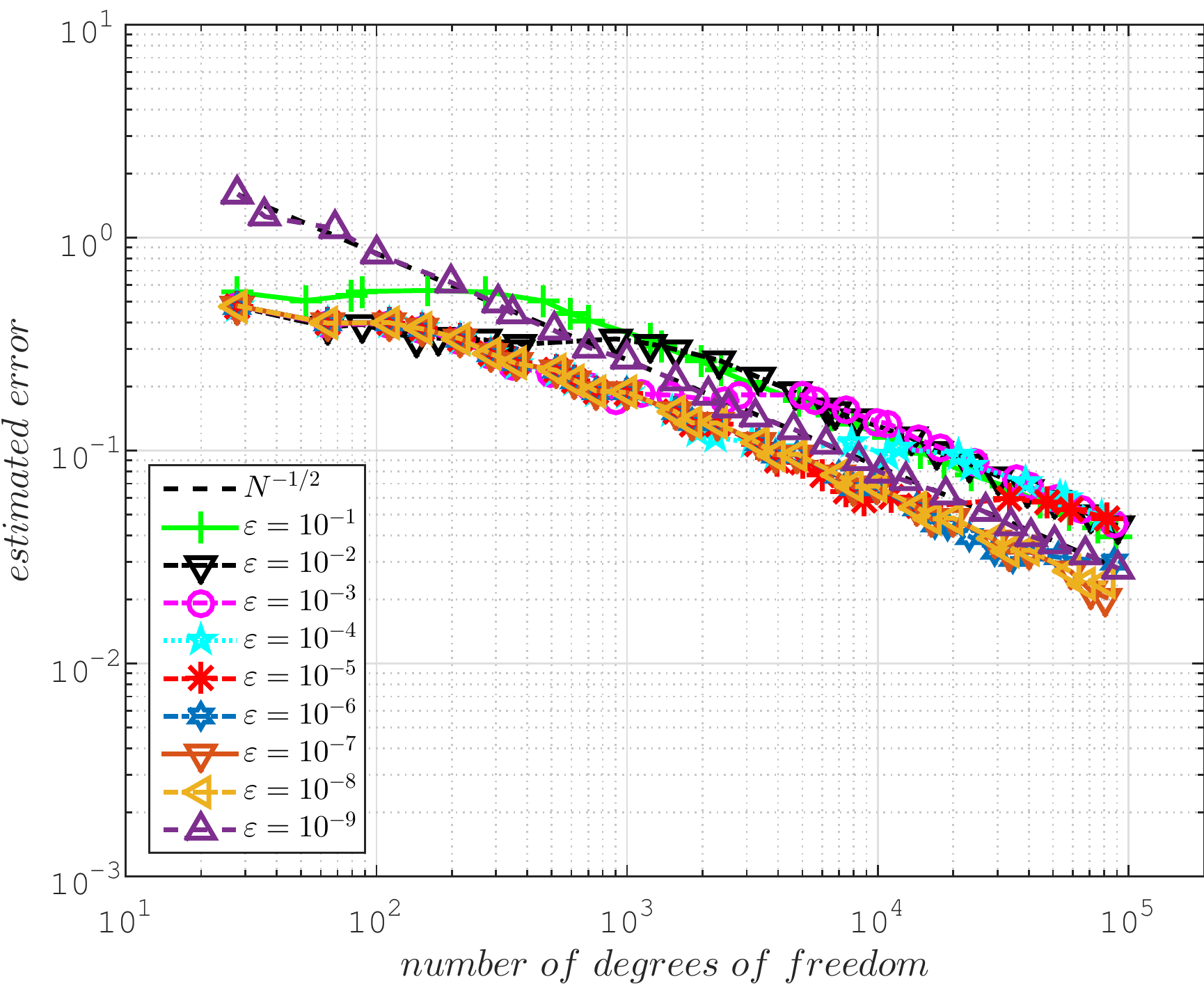}
\caption{Estimated residuals for different choices of $\varepsilon$. On the left for Example~\ref{ex:1} and on the right for Example~\ref{ex:2}.}
\label{Performance-Data}
\end{figure}
\begin{example}

\label{ex:2}
Finally, we turn to the well-known nonlinear Ginzburg-Landau equation on the square~$\Omega=(-1,1)^2$ given by
\begin{equation*}
\begin{aligned}
-\varepsilon\Delta u&= u(1- u^2) \ \text{in } \Omega,\qquad
u = 0 \ \text{on } \partial \Omega.
\end{aligned}
\end{equation*}
Clearly $ u\equiv 0 $ is a solution. In addition, any solution~$u$ appears pairwise as $-u$ is obviously a solution also. Again, neglecting the boundary conditions for a moment, we observe that $u\equiv1$ and $u \equiv-1$ are solutions of the PDE. We therefore expect boundary layers along $ \partial \Omega$, and possibly within the domain~$\Omega$; see Figure~\ref{Ginzburg-Landau-Mesh} (right).
Here we always start from the initial mesh depicted in Figure~\ref{Ginzburg-Landau-Mesh} (left) with $u_{0}^{h}\equiv 1$ on the interior nodes. Again we test the fully adaptive \ptc-Galerkin Algorithm~\ref{al:full} for different choices of $\varepsilon=\{10^{-i}\}_{i=0}^{9}$.
The parameters are still chosen to be $\theta = 0.5$ and $ k_0=1$. As in Example~\ref{ex:1}, for $\varepsilon\to 0 $ the resulting solution feature ever stronger boundary layers; see Figure \ref{Ginzburg-Landau-Mesh} (right). In addition, from the performance data given in Figure 
\ref{Performance-Data} (right) we observe that the residuals decay again robust in $\varepsilon$. Finally we notice convergence of (optimal) order~$\nicefrac12$ with respect to the number of degrees of freedom. We remark that, although~\eqref{eq:A1} and~\eqref{eq:A2} are not necessarily satisfied for this problem, our fully adaptive \ptc-Galerkin approach still delivers good results.
\begin{figure}
\includegraphics[width=0.43\textwidth]{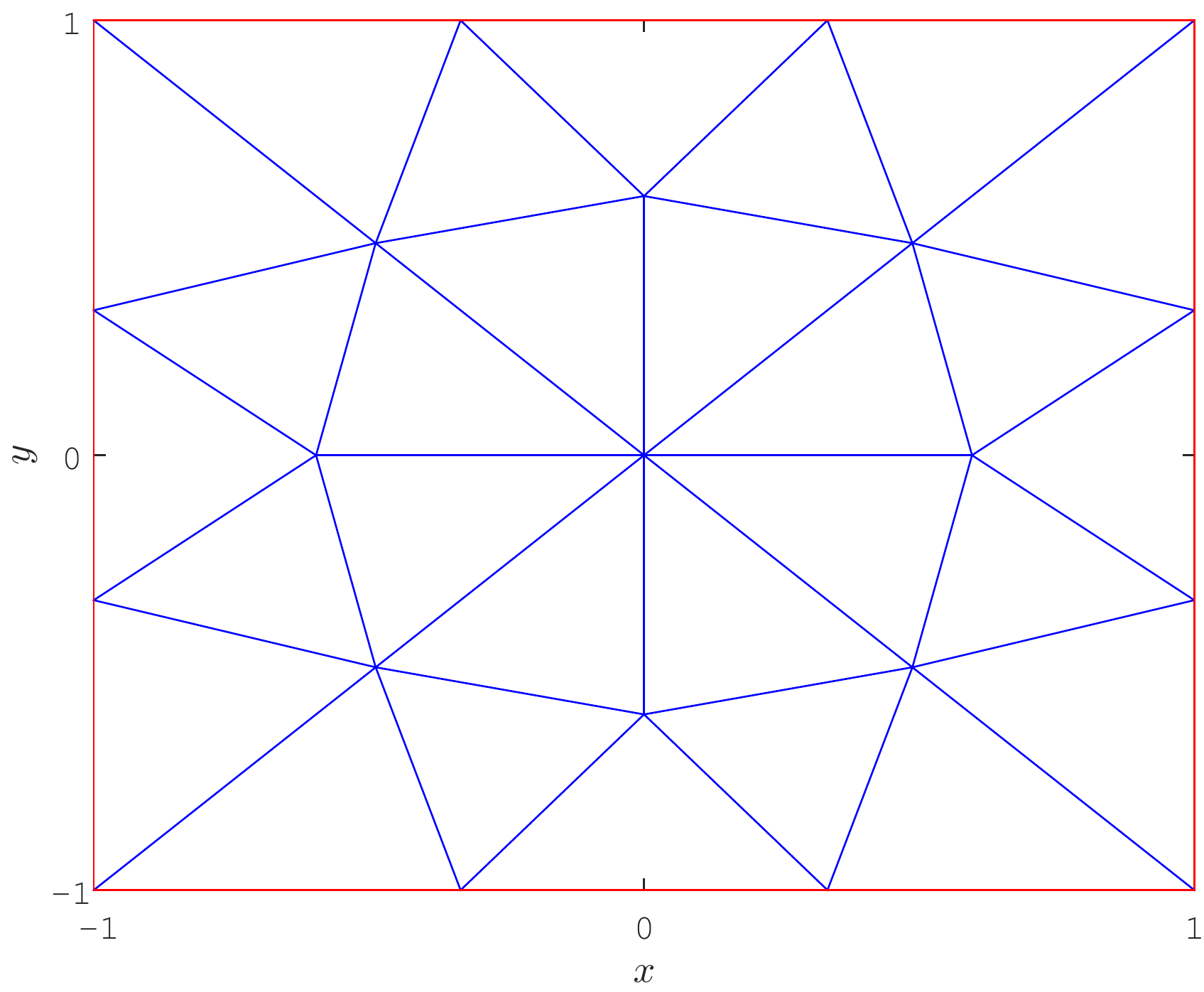}
\hfill
\includegraphics[width=0.43\textwidth]{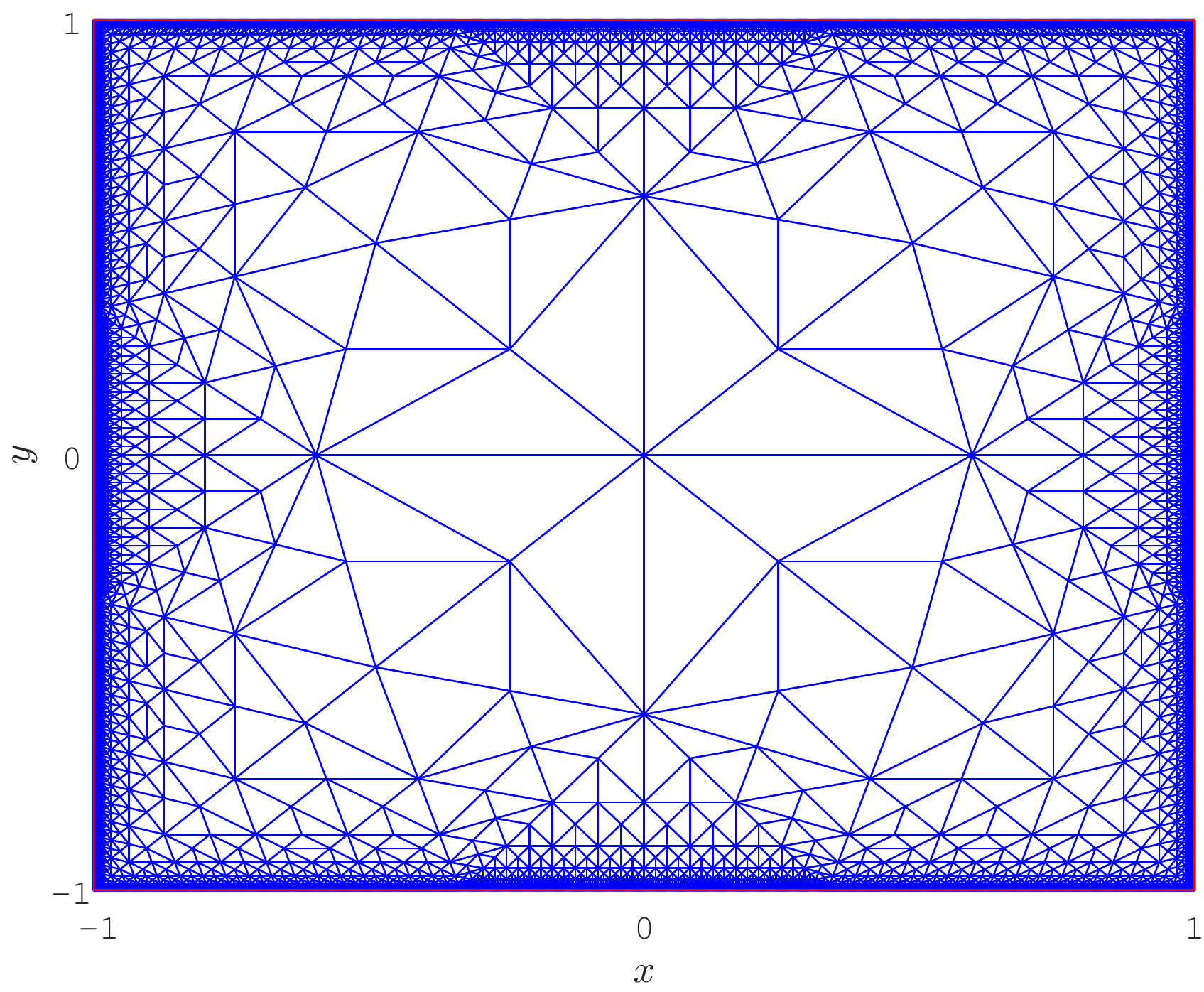}
\caption{Example~\ref{ex:2} for $\varepsilon =10^{-7}$: Initial mesh (left), and the adaptively refined mesh resolvong the solution (right).}		
\label{Ginzburg-Landau-Mesh}
\end{figure}
\end{example}

\section{Conclusions}\label{sc:concl}
The aim of this paper was to introduce a reliable and computationally feasible procedure for the numerical solution of semilinear elliptic boundary value problems, with possible singular perturbations. The key idea is to combine adaptive step size control for the \ptc-metod with an automatic mesh refinement finite element procedure. Furthermore, the sequence of linear problems resulting from the application of pseudo transient continuation and Galerkin discretization is treated by means of a robust (with respect to the singular perturbations) {\em a posteriori} residual analysis and a corresponding adaptive mesh refinement process. Our numerical experiments clearly illustrate the ability of our approach to reliably find solutions reasonably close to the initial guesses, and to robustly resolve the singular perturbations at an optimal rate.

\bibliographystyle{amsplain}
\bibliography{references}
\end{document}